\newtheorem{theorem}{Theorem}         
\newtheorem{proposition}{Proposition}
\newtheorem{cor}[proposition]{Corollary}
\newtheorem{lemma}[proposition]{Lemma}
\newtheorem{remark}[proposition]{Remark}
\numberwithin{equation}{section}
\renewcommand{\AA}{\mathcal A}
\newcommand{\Z}{\mathbb Z}
\newcommand{\N}{\mathbb N}
\newcommand{\R}{\mathbb R}
\newcommand{\Q}{\mathbb Q}
\newcommand{\RRR}{\mathfrak R}
\newcommand{\SSS}{\mathfrak S}
\newcommand{\EE}{\mathcal E}
\renewcommand{\H}{\mathbb H}
\newcommand{\CC}{\mathcal C}
\newcommand{\RR}{\mathcal R}
\newcommand{\MM}{\mathcal M}
\newcommand{\wMM}{\widetilde{\RRR}}
\newcommand{\NN}{\mathcal N}
\newcommand{\wNN}{\widetilde{\NN}}
\newcommand{\TT}{\mathcal T}
\newcommand{\FF}{\mathcal F}
\renewcommand{\Re}{\operatorname{Re}}
\newcommand{\veps}{\varepsilon}
\newcommand{\tr}{\operatorname{Tr}}
\newcommand{\comment}[1]{}
\title[Angles between reciprocal geodesics]{Pair correlation of
angles between reciprocal \\  geodesics on the modular surface}
\author{Florin P. Boca, Vicen\c tiu Pa\c sol, Alexandru A. Popa, Alexandru Zaharescu}
\address{FPB and AZ: Department of Mathematics, University of Illinois, 1409 W. Green Street, Urbana, IL 61801, USA}
\address{E-mail: fboca@illinois.edu}
\address{E-mail: zaharesc@illinois.edu}
\address{FPB, VP, AAP, AZ: Institute of Mathematics ``Simion Stoilow" of the Romanian Academy,
P.O. Box 1-764, RO-014700 Bucharest, Romania}
\address{E-mail: vicentiu.pasol@imar.ro}
\address{E-mail: alexandru.popa@imar.ro}
\keywords{Modular surface; reciprocal geodesics; pair correlation; hyperbolic lattice points}
\subjclass[2010]{Primary: 11K36 ; Secondary: 11J71, 11M36, 11P21, 37D40}
\date{October 21, 2013}
\begin{document}


\begin{abstract}
The existence of the limiting pair correlation for angles between reciprocal geodesics on the
modular surface is established. An explicit formula is provided, which
captures geometric information about the length of reciprocal geodesics, as well as
arithmetic information about the associated reciprocal classes of binary quadratic forms.
One striking feature is the absence of a gap beyond zero in
the limiting distribution, contrasting with the analog Euclidean situation.
\end{abstract}

\maketitle

\section{Introduction}
Let $\H$ denote the upper half-plane and $\Gamma=\mathrm{PSL}_2(\Z)$ the modular group.
Consider the modular surface $X=\Gamma\backslash \mathbb{H}$, and let $\Pi:\H \rightarrow
X$ be the natural projection. The angles on the upper half plane $\H$ considered in this
paper are the same as the angles on $X$ between the closed geodesics passing through
$\Pi(i)$, and the image of the imaginary axis. These geodesics were first introduced in
connection with the associated ``self-inverse classes'' of binary quadratic forms in the
classical work of Fricke and Klein \cite[p.164]{FK}, and the primitive geodesics among
them were studied recently and called reciprocal geodesics by Sarnak \cite{Sar}. The aim
of this paper is to establish the
existence of the pair correlation measure of their angles and to explicitly express it.

For $g\in \Gamma$, denote by $\theta_g \in [-\pi,\pi]$ the angle between the
vertical geodesic $[i,0]$ and the geodesic ray $[i,gi]$. For $z_1,z_2\in\H$, let $d(z_1,z_2)$ denote the
hyperbolic distance, and set $$\| g\|^2=2\cosh d(i,gi)=a^2+b^2+c^2+d^2, \quad \text{for } g=\left(
\begin{smallmatrix} a & b \\
c & d \end{smallmatrix}\right) \in \mathrm{SL}_2(\R).$$

It was proved by Nicholls \cite{Ni1} (see also
\cite[Theorem 10.7.6]{Ni2}) that for any  discrete subgroup $\Gamma$ of finite covolume in $\mathrm{PSL}_2(\R)$,
the angles
$\theta_\gamma$ are uniformly distributed, in the sense that for any fixed interval
$I\subseteq [-\pi,\pi]$,
\begin{equation*}
\lim_{R\rightarrow \infty} \frac{ \# \big\{ \gamma\in\Gamma:
\theta_\gamma \in I, d(i,\gamma i) \leqslant R\big\}}{
\# \big\{ \gamma\in \Gamma: d(i,\gamma i)\leqslant R\big\}} = \frac{\vert I\vert}{2\pi} .
\end{equation*}
Effective estimates for the rate of convergence that allow one to take $\vert I\vert \asymp e^{-cR}$
as $R\rightarrow \infty$ for some constant $c=c_\Gamma >0$ were proved for
$\Gamma=\Gamma (N)$
by one of us \cite{Bo}, and in general situations by Risager and Truelsen \cite{RT} and by
Gorodnik and Nevo \cite{GN}.
Other related results concerning the uniform distribution of real parts of orbits in
hyperbolic spaces were
proved by Good \cite{Goo}, and more recently by Risager and Rudnick \cite{RR}.

The statistics of spacings, such as the pair correlation or the nearest neighbor
distribution
(also known as the gap distribution) measure the fine structure of sequences of real
numbers in a more
subtle way than the classical Weyl uniform distribution.
Very little is known about the spacing statistics of closed geodesics.
In fact, the only result that we are aware of, due to Pollicott and Sharp \cite{PS},
concerns the correlation of differences of lengths of pairs of
closed geodesics on a compact surface of negative curvature,
ordered with respect to the word length on the fundamental group.

This paper investigates the pair correlation
of angles $\theta_\gamma$ with $d(i,\gamma i)\leqslant R$, or equivalently
with $\| \gamma\|^2 \leqslant Q^2 =e^R \sim 2\cosh R$ as $Q\rightarrow\infty$.
As explained in Section \ref{Section2}, these are exactly the angles between
reciprocal geodesics on the modular surface.

The Euclidean analog of this problem considers the angles between the line segments connecting
the origin $(0,0)$ with all integer points $(m,n)$ with $m^2+n^2 \leqslant Q^2$ as $Q\rightarrow \infty$.
When only primitive lattice points are being considered (rays are counted with multiplicity one),
the problem reduces to the study of the pair correlation of the sequence of Farey
fractions with the $L^2$ norm $\| m/n\|_2^2=m^2+n^2$.
Its pair correlation function is plotted on the left of Figure \ref{Figure1}.
When Farey fractions are ordered by their denominator, the pair correlation is shown to
exist and it is explicitly computed in \cite{BZ}.
A common important feature is the existence of a gap
beyond zero for the pair correlation function. This is
an ultimate reflection of the fact that the area of a nondegenerate triangle with
integer vertices is at least $\frac{1}{2}$, which corresponds to the familiar inequality
$| \frac{b}{d}-\frac{a}{c} | \geqslant \frac{1}{cd}$ satisfied by two lattice points
$P=(a,b)$ and $Q=(c,d)$ with $\operatorname{Area} (\triangle OPQ)>0$.

For the hyperbolic lattice centered at $i$, it is convenient to start with the
(non-uniformly distributed) numbers
$\tan( \frac{\theta_\gamma}{2})$ with multiplicities, rather than
the angles $\theta_\gamma$ themselves. Employing obvious symmetries explained in Section
\ref{Section3}, it is further convenient to restrict to a set of representatives
$\Gamma_{\bf I}$ consisting of matrices $\gamma$ with nonnegative entries such that the
point $\gamma i$ is in
the first quadrant in Figure \ref{Figure2}.
The pair correlation measures of the finite set ${\mathfrak A}_Q$ of elements
$\theta_\gamma$ with $\gamma \in \Gamma_{\bf I}$ and $\| \gamma \| \leqslant Q$ (counted
with multiplicities) is defined as
\begin{equation*}
R_Q^{\mathfrak A} (\xi)  =\frac{1}{B_Q} \# \bigg\{ (\gamma,\gamma^\prime)\in\Gamma_{\bf I}^2 :
\| \gamma\|,\| \gamma^\prime \| \leqslant Q,\ \gamma^\prime \neq \gamma,\ 0\leqslant
\frac{2}{\pi} \big( \theta_{\gamma^\prime}- \theta_\gamma \big) \leqslant \frac{\xi}{B_Q}\bigg\},
\end{equation*}
where $B_Q \sim \frac{3}{8}Q^2$ denotes the number of elements $\gamma \in\Gamma_{\bf I}$
with $\| \gamma\|\leqslant Q$. As it will be used in the proof, we similarly define the
pair correlation measure $R_Q^{\mathfrak T} (\xi)$ of the set ${\mathfrak T}_Q$ of
elements $\tan (\frac{\theta_\gamma}{2})$ with $\gamma \in \Gamma_{\bf I}$ and $\| \gamma
\| \leqslant Q$.

One striking feature, illustrated by the numerical calculations in Figure \ref{Figure1},
points to the absence of a gap beyond zero in the limiting distribution, in
contrast with the analog Euclidean situation.

The main result of this paper is the proof of existence and explicit computation of the
pair correlation measure $R_2^{\mathfrak A}$ given by
\begin{equation}\label{1.1}
R_2^{\mathfrak A} (\xi) = R_2^{\mathfrak A} \big( (0,\xi]\big): =\lim\limits_{Q\rightarrow
\infty} R_Q^{\mathfrak A} (\xi),
\end{equation}
and similarly for $R_2^{\mathfrak T}$, thus answering a question raised in \cite{Bo}.

To give a precise statement consider ${\mathfrak S}$, the free
semigroup on two generators $L=\left( \begin{smallmatrix} 1 & 0 \\ 1 & 1 \end{smallmatrix}\right)$ and
$R=\left( \begin{smallmatrix} 1 & 1 \\ 0 & 1 \end{smallmatrix}\right)$.
Repeated application of the Euclidean algorithm shows that
${\mathfrak S}\cup \{ I\}$ coincides with the set of matrices in $\mathrm{SL}_2(\Z)$ with nonnegative entries.
The explicit formula for  $R_2^{\mathfrak T} (\xi)$ is given as a series of volumes summed over $\SSS$, plus a
finite sum of volumes, and it is stated in Theorem \ref{T2} of Section 7. The formula for $R_2^{\mathfrak T} (\xi)$ leads to an
explicit formula for $R_2^{\mathfrak A} (\xi)$, which we state here, partly because the pair correlation
function for the angles $\theta_\gamma$ is more interesting, being equidistributed, and partly because the
formula we obtain is simpler.

\begin{theorem}\label{T1}
The pair correlation measure $R_2^{\mathfrak A}$ on $[0,\infty)$ exists and is given by the $C^1$
function
\begin{equation}\label{1.2}
R_2^{\mathfrak A} \bigg( \frac{3}{4\pi}\xi\bigg) = \frac{8}{3\zeta (2)} \Bigg(
\sum\limits_{M\in {\mathfrak S}} B_M(\xi) +\sum_{\ell\in [0,\xi/2)}
\sum_{K\in [1,\xi/2)} A_{K,\ell}(\xi) \Bigg).
\end{equation}
For $M\in\SSS$, letting $U_M= \| M\|^2/\sqrt{\| M\|^4-4}$, $\theta_M$ as above, and $f_+=\max(f,0)$, we
have
\[
B_M(\xi)=\frac{\pi}{4} \int_0^{\pi /2}
\frac{\left(1/\sqrt{\| M\|^4-4}-\sin(2\theta-\theta_M)/\xi\right)_+}{U_M+\cos(2\theta-\theta_M)}\, d\theta.
\]
For integers $\ell\in [0,\frac{\xi}{2})$, $K\in [1,\frac{\xi}{2})$, we have
\[ A_{K,\ell}(\xi)= \int_0^{\pi/4} A_{K,\ell}\bigg(\frac{\xi}{2\cos^2 t},t\bigg)\, \frac{dt}{\cos^2 t} ,
\]
where $A_{K,\ell} (\xi,t)$ is the area of the region defined by
\begin{equation}\label{1.3}
\left\{ re^{i\theta}\in[0,1]^2:L_{\ell+1}(e^{i\theta})>0, \
\frac{F_{K,\ell}(\theta)}{\xi}  \leqslant  r^2 \leqslant
\frac{\cos^2 t}{ \max\big\{ 1,L_\ell^2 (e^{i\theta}) +L_{\ell+1}^2 (e^{i\theta}) \big\}}
\right\},
\end{equation}
with $e^{i\theta}=(\cos\theta,\sin\theta)$, the piecewise linear functions $L_i$ as defined in \eqref{5.5}, and with
\begin{equation*}
F_{K,\ell} (\theta) := \cot\theta +\sum_{i=1}^\ell
\frac{1}{L_{i-1}(e^{i\theta})L_{i}(e^{i\theta})}
+\frac{L_{\ell+1}(e^{i\theta})}{L_\ell (e^{i\theta}) \big( L_\ell^2
(e^{i\theta})+L_{\ell+1}^2 (e^{i\theta})\big)} .
\end{equation*}
\end{theorem}

Rates of convergence in \eqref{1.1} are effectively described in the proof of Theorem \ref{T2} and in Proposition
\ref{P15}.

When $\xi\leqslant 2$, the second sum in \eqref{1.2} disappears and the derivative $B_M'(\xi)$ is explicitly
computed in Lemma \ref{L17}, yielding an explicit formula for the pair correlation density function $g_2^{\mathfrak
A}(\xi)=\frac{ d R_2^{\mathfrak A} (\xi)}{d\xi} $ which matches the graph in Figure \ref{Figure1}.

\begin{cor}\label{C1}
For $0<\xi\leqslant 2$ we have
\[
g_2^{\mathfrak A}\left( \frac{3}{4\pi}\xi
\right)=\frac{16}{3\xi^2}\sum_{M\in
\mathfrak{S}} \ln\left(
\frac{\|M\|^2+\sqrt{\|M\|^4-4}}{\|M\|^2+\sqrt{\|M\|^4-4-\xi^2}}\right) .
\]
A formula valid for $0<\xi\leqslant 4$ is given in \eqref{8.13} after computing $A_{0,K}^{\prime}(\xi)$.
\end{cor}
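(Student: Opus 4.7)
My proof plan is to assemble three routine steps around the derivative formula provided by Lemma \ref{L21}.

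\emph{Step 1.} For $0 < \xi \leq 2$ the constraint $K \in [1, \xi/2)$ admits no positive integer, so the entire secondary sum $\sum_\ell \sum_K A_{K,\ell}(\xi)$ in \eqref{1.6} is empty. Theorem \ref{T1} therefore reduces to
\[
R_2^{\mathfrak A}\!\left(\tfrac{3}{4\pi}\xi\right) = \frac{8}{3\zeta(2)} \sum_{M \in \mathfrak S} B_M(\xi), \qquad 0 < \xi \leq 2.
\]

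\emph{Step 2.} I differentiate both sides in $\xi$. The interchange with summation is justified by absolute convergence: from Lemma \ref{L21} one reads off $B_M'(\xi) = O(\|M\|^{-4})$ uniformly for $\xi$ in compact subintervals of $(0, 2]$, and the asymptotic $B_Q \sim \frac{3}{8}Q^2$ yields $\#\{M \in \mathfrak S : \|M\|^2 \leq T\} = O(T)$, hence $\sum_M \|M\|^{-4} < \infty$. Combining with the chain rule applied to $\eta = \frac{3}{4\pi}\xi$ gives
\[
g_2^{\mathfrak A}\!\left(\tfrac{3}{4\pi}\xi\right) = \frac{32\pi}{9\zeta(2)} \sum_{M \in \mathfrak S} B_M'(\xi).
\]

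\emph{Step 3.} Insert the closed form of Lemma \ref{L21},
\[
B_M'(\xi) = \frac{\pi}{4\xi^2}\ln\!\left(\frac{\|M\|^2+\sqrt{\|M\|^4-4}}{\|M\|^2+\sqrt{\|M\|^4-4-\xi^2}}\right),
\]
which is valid since $\|M\|^4 \geq 9$ for every $M \in \mathfrak S$ and $\xi^2 \leq 4$, so the inner radical is real. Using $\zeta(2) = \pi^2/6$ one computes $\frac{32\pi}{9\zeta(2)} \cdot \frac{\pi}{4} = \frac{16}{3}$, which produces the identity claimed in the statement.

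The main obstacle is Lemma \ref{L21} itself. I would prove it by differentiating $B_M(\xi)$ under the integral sign (the positive-part kink contributes no boundary term because the integrand vanishes there), localizing to the $\alpha = 2\theta - \theta_M$ subregion of $[-\theta_M, \pi-\theta_M]$ where $\sin\alpha < \xi/\sqrt{\|M\|^4-4}$, and evaluating the resulting expression via the antiderivative $-\ln(U_M+\cos\alpha)$ of $\sin\alpha/(U_M+\cos\alpha)$. The collapse to a function depending only on $\|M\|^2$ is forced by the identities $U_M+\cos\theta_M = 2(c^2+d^2)/\sqrt{\|M\|^4-4}$ and $U_M-\cos\theta_M = 2(a^2+b^2)/\sqrt{\|M\|^4-4}$ (derived by computing $Mi=(ac+bd+i)/(c^2+d^2)$ and pushing forward by the Cayley transform) together with $(a^2+b^2)(c^2+d^2) = (ac+bd)^2+1$ (from $\det M = 1$); the most delicate point is the case analysis of the integration region according to the sign of $\sin\theta_M - \xi/\sqrt{\|M\|^4-4}$.
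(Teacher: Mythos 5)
Your proof is correct and follows essentially the same route as the paper: for $\xi\leqslant 2$ the interval $[1,\xi/2)$ contains no integers so the $A_{K,\ell}$ sum drops out, and one differentiates the $B_M$ series term by term using Lemma~\ref{L21}, with the constant check $\frac{32\pi}{9\zeta(2)}\cdot\frac{\pi}{4}=\frac{16}{3}$. Your explicit justification of the interchange of $\frac{d}{d\xi}$ and $\sum_M$ via $B_M'(\xi)=O(\|M\|^{-4})$ and $\#\{M:\|M\|\leqslant Q\}=O(Q^2)$ is a welcome tightening of what the paper leaves implicit, and your sketch of Lemma~\ref{L21} (differentiating under the integral via Lemma~\ref{R17}, localizing to the sublevel set of $|\sin(2\theta-\theta_M)|$, and integrating $\sin\alpha/(U_M+\cos\alpha)$) coincides with the paper's computation, including the observation that for $\xi\leqslant 2\leqslant 2Z_M$ the integration window is symmetric about $\theta_M/2$ so the answer depends only on $\|M\|^2$.
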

The computation is performed in \S\ref{sec8.1}, and it identifies the first spike in the graph of $g_2^{\mathfrak
A}(x)$ at $x=\frac{3}{4\pi} \sqrt{5}$. A proof of an explicit formula for the pair correlation density
$g_2^{\mathfrak A}(x)$ valid for all $x$, and working also when the point $i$ is replaced by the other
elliptic point $\rho=e^{\pi i/3}$, will be given in \cite{BPZ}.

\begin{figure}[ht]
\begin{center}
\includegraphics*[scale=0.42, bb=8 7 557 414]{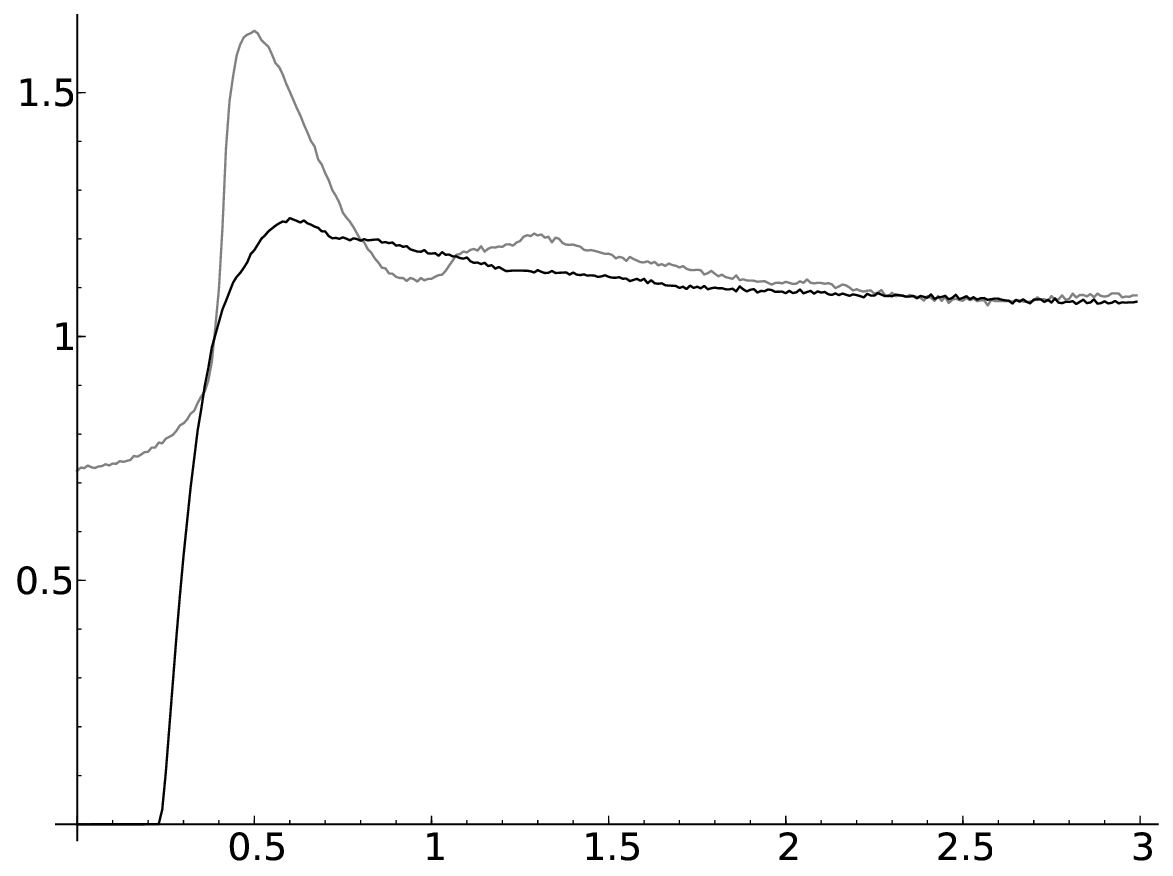}
\includegraphics*[scale=0.42, bb=8 7 557 414]{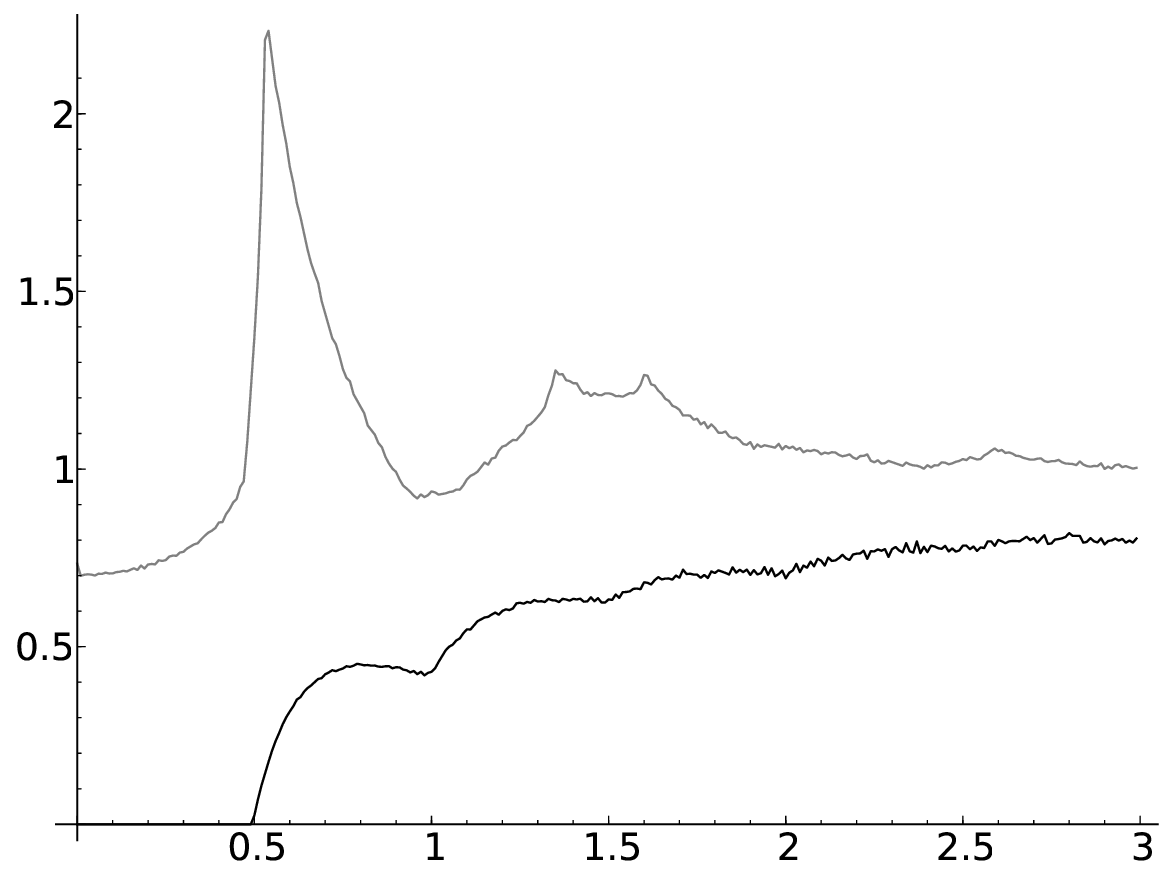}
\caption{The pair correlation functions $g_2^{\mathfrak T}$ (left) and
$g_2^{\mathfrak A}$ (right), plotted in grey, compared with the pair
correlation function of Farey fractions with $L^2$ norm (left), and
of the angles (with multiplicities) of lattice points in Euclidean balls (right). The
graphs are obtained by counting the pairs in their definition, using $Q=4000$ for which
$B_Q=6000203$. We used Magma \cite{BCP} for the numerical computations, and SAGE \cite{St}
for plotting the graphs.  }
\label{Figure1}
\end{center}
\end{figure}

Since the series in Corollary \ref{C1} is dominated by the absolutely convergent $\sum_{M}\xi^2 \|M\|^{-4}$,
we can take the limit as $\xi\rightarrow 0$:
\[
g_2^{\mathfrak A}(0) = \frac{2}{3} \sum\limits_{M\in
{\mathfrak S}}  \left(\frac{\|M\|^2}{\sqrt{\|M\|^4-4}}-1\right)=0.7015...
\]
Remarkably the previous two formulas, as well as \eqref{1.2} for $\xi \leqslant 2$, can be written geometrically as
a sum over the primitive closed geodesics
$\mathcal{C}$ on $X$ which pass through the point $\Pi(i)$, where the summand depends only
on the length $\ell(\mathcal{C})$:
\[
g_2^{\mathfrak A}(0)=\frac{8}{3} \sum_{\mathcal{C}}
\sum_{n\geqslant 1} \frac{1}{e^{n\ell(\mathcal{C})}-1}.
\]
This is proved in Section \ref{Section2}, where we also give an arithmetic version
based on an explicit description of the reciprocal geodesics $\mathcal{C}$ due to Sarnak
\cite{Sar}.

For the rest of the introduction we sketch the main ideas behind the proof, describing also the organization of
the article. After reducing to angles in the first quadrant in Section \ref{Section3}, we show that the pair
correlation of the quantities $\Psi(\gamma)=\tan(\frac{\theta_\gamma}{2})$ is identical to that of $\Phi(\gamma)=\Re(
\gamma i)$. We are led to estimating the cardinality of the set
\[
\big\{ (\gamma,\gamma^\prime)\in\Gamma_{\bf I}^2 :
\| \gamma\|,\| \gamma^\prime\| \leqslant Q,\  \gamma^\prime \neq \gamma,\ 0\leqslant
Q^2 (\Phi(\gamma^\prime)-\Phi(\gamma)) \leqslant \xi\big\}.
\]
For $\gamma=\left( \begin{smallmatrix} p' & p \\ q' & q \end{smallmatrix}\right)$ with nonnegative
entries, $\|\gamma\|\leqslant Q$,
and $q,q'>0$, consider the associated Farey interval $[\frac{p}{q},\frac{p'}{q'}]$, which contains
$\Phi(\gamma)$. In Section \ref{Section4}, we break the set of pairs $(\gamma, \gamma')$ above in two parts,
depending on whether one of the associated Farey intervals contains the other, or the two intervals intersect at most
at one endpoint. In the first case we have $\gamma=\gamma' M$ or $\gamma'=\gamma M$ with $M\in \SSS$, while in the
second we have a similar relation depending on the number $\ell$ of consecutive Farey fractions there are
between the two intervals. The first case contributes to the series over $\SSS$ in \eqref{1.2}, while the second
case contributes to the sum over $K,\ell$. The triangle map $T$ whose iterates define the piecewise linear
functions $L_i(x,y)$, first introduced in \cite{BCZ}, makes its
appearance in the second case, being related to the denominator of the successor function for Farey fractions.

To estimate the number of pairs $(\gamma, \gamma M)$ in the first case, a key observation is that for each
$M\in\Gamma$ there exists an explicit elementary function $\Xi_M(x,y)$, given by \eqref{5.1}, such that
\[
\Phi(\gamma)-\Phi(\gamma M)=\Xi_M(q',q)
\]
for $\gamma$ as above. Together with estimates for the number of points in two dimensional regions based on bounds
on Kloosterman sums (Lemma \ref{L7}), this allows us to estimate the number of pairs $(\gamma, \gamma M)$
with fixed $M\in \SSS$, in terms of the volume of a three dimensional body $S_{M,\xi}$ given in
\eqref{7.15}. The absence of a gap beyond zero in the pair correlation measure arises as a result of this estimate.
The details of the calculation are given in Section \ref{Section7}, leading to an explicit formula for $R_2^{\mathfrak
T}$ (Theorem \ref{T2}).

Finally in Section \ref{Section8} we pass to the pair correlation of the angles $\theta_\gamma$, obtaining the
formulas of Theorem \ref{T1} and Corollary \ref{C1}.

In this paper we focus on the full modular lattice centered at $i$, both because of the
arithmetic connection with reciprocal geodesics, and because in this case the connection
between unimodular matrices and Farey intervals is most transparent. It is this connection
and the intuition provided by the repulsion of Farey fractions that guides our argument,
and leads to the explicit formula for the pair correlation function, which is the first of
this kind for hyperbolic lattices.

In a subsequent paper \cite{BPZ}, we abstract some of this intuition and propose a
\emph{different} conjectural formula for the pair correlation function of an arbitrary
lattice in $\mathrm{PSL}_2(\R)$, centered at a point on the upper half plane, which we
prove for the full level lattice centered at elliptic points. While the formula in
\cite{BPZ} is more general, the method of proof, and the combinatorial-geometric intuition
behind it, is reflected more accurately in the formula of Theorem \ref{T1}: the infinite
sum in the formula corresponds to pairs of matrices where there is no repulsion between
their Farey intervals, while the finite sum corresponds to pairs of matrices where there
is repulsion. The approach used in \cite{BPZ} builds on the estimates and method of the
present paper.

A proof of the conjecture in \cite{BPZ} by spectral methods has been proposed very
recently by Kelmer and Kontorovich in the preprint \cite{KK}. By comparison, our approach
is entirely elementary (using only standard bounds on Kloosterman sums), and via the
repulsion argument it provides a natural way of approximating the pair correlation
function. A key insight in the present paper, which is also
the starting point of \cite{BPZ} and \cite{KK}, is that instead of counting pairs
$(\gamma,\gamma')\in \Gamma \times \Gamma$ in the definition of the pair correlation
measure, we fix a matrix $M$, count pairs $(\gamma,\gamma M)$, and sum over $M$. The same
approach may prove useful for the pair correlation problem for lattices in other groups as
well.

\section{Reciprocal geodesics on the modular surface}\label{Section2}

In this section we recall the definition of reciprocal geodesics and explain how the pair
correlation of the angles they make with the imaginary axis is related to the pair
correlation considered in the introduction. We also show that the sums over the
semigroup $\mathfrak{S}$ appearing in the introduction can be expressed geometrically in
terms of sums over primitive reciprocal geodesics. A
description of the trajectory of reciprocal geodesics  on the fundamental domain
seems to have first appeared in the classical work of Fricke and Klein \cite[p.164]{FK},
where it is shown that they consist of two closed loops, one the reverse of the other.
There the terminology ``sich selbst inverse Classe'' is used for the equivalence classes
of quadratic forms corresponding to reciprocal conjugacy classes of hyperbolic matrices.

Oriented closed geodesics on $X$ are in one-to-one correspondence with conjugacy classes
$\{\gamma\}$ of hyperbolic elements $\gamma\in \Gamma$.
To a hyperbolic element $\gamma\in \Gamma$ one attaches its axis $a_\gamma$ on $\H$,
namely the semicircle whose endpoints are the fixed points of $\gamma$ on the real
axis. The part of the semicircle between $z_0$ and $\gamma z_0$, for any $z_0\in
a_\gamma$, projects to a closed geodesic on $X$, with multiplicity one if only if $\gamma$
is a primitive matrix (not a power of another hyperbolic element of $\Gamma$). The group
that fixes the semicircle $a_\gamma$ (or equivalently its endpoints on the real axis)
is generated by one primitive element $\gamma_0$.

We are concerned with (oriented) closed geodesics passing through $\Pi (i)$ on $X$. Since
the axis of a hyperbolic element $A$ passes through $i$ if and only if $A$ is symmetric,
the closed geodesics passing through $\Pi (i)$ correspond to the set $\RR$ of
hyperbolic conjugacy classes $\{\gamma\}$ which contain a symmetric matrix. The latter are
exactly the reciprocal geodesics considered by Sarnak in \cite{Sar}, where only
primitive geodesics are considered.

The reciprocal geodesics can be parameterized in a two-to-one manner by the set  ${\mathfrak
S}\subset\Gamma$, defined in the introduction, which consists of matrices distinct
from the identity with nonnegative entries. To describe this correspondence, let
$\AA\subset\Gamma$ be the set of symmetric hyperbolic matrices with positive entries.
Then we have maps
\begin{equation}\label{2.1}
{\mathfrak S}\rightarrow \AA \rightarrow \RR
\end{equation}
where the first map takes $\gamma\in {\mathfrak S}$ to $A=\gamma \gamma^t$, and the second
takes the hyperbolic symmetric $A$  to its conjugacy class $\{A\}$. The first map is
bijective, while the second is two-to-one and onto, as it follows from \cite{Sar}. More
precisely, if $A=\gamma\gamma^t\in \AA$ is a primitive matrix, then $B=\gamma^t \gamma\ne
A$ is the only other matrix in $\AA$ conjugate with $A$, and $\{A^n\}=\{B^n\}$ for all
$n\geqslant 0$.

Note also that $\| \gamma \|^2=\tr(\gamma\gamma^t)$, and if $A$ is hyperbolic
with $\tr(A)=T$, then the length of the geodesic associated to $\{A\}$ is $2\ln N(A)$
with $N(A)=\frac{1}{2} (T+\sqrt{T^2-4})$.

We need the following:
\begin{lemma}\label{L2}
Let $A\in \Gamma$ be a hyperbolic symmetric matrix and let $\gamma \in \Gamma$ such that
$A=\gamma \gamma^t$. Then the point $\gamma i$ is halfway
{\em (}in hyperbolic distance{\em )} between $i$ and $Ai$ on the axis of $A$.
\end{lemma}

\begin{proof}
We have $d(i,\gamma i)= d(i, \gamma^t i)=d(\gamma i, A i)$ where the first equality
follows from the hyperbolic distance formula and the second since $\Gamma$ acts by
isometries on $\H$.
Using formula \eqref{3.3}, one checks that the angles of $i, \gamma i$ and $i, Ai$  are
equal, hence $\gamma i$ is indeed on the axis of $A$.
\end{proof}

We can now explain the connection between the angles $\theta_\gamma$ in the first and
second quadrant in Figure \ref{Figure2}, and the angles made by the reciprocal geodesics
with the image $\Pi(i\rightarrow i\infty)=\Pi(i\rightarrow 0)$. Namely, points in the
first and second quadrant are parameterized by $\gamma i$ with $\gamma \in {\mathfrak S}$,
and by the lemma the reciprocal geodesic corresponding to $A=\gamma \gamma^t\in \AA$
consists of the loop $\Pi(i\rightarrow \gamma i)$, followed by $\Pi(i\rightarrow \gamma^t
i)$ (which is the same as the reverse of the first loop).
Therefore to each reciprocal geodesic corresponding to $A=\gamma \gamma^t\in\AA$
correspond two angles, those attached to $\gamma i$ and $\gamma^t i$ in Figure
\ref{Figure2}, measured in the first or second
quadrant so that all angles are between 0 and $\frac{\pi}{2}$.

In conclusion the angles made by the reciprocal geodesics on $X$ with the
fixed direction $\Pi(i\rightarrow i\infty)$ consist of the angles
in the first quadrant considered before, each appearing twice. Ordering the points $\gamma
i$ in the first quadrant by $\|\gamma\|$ corresponds to ordering the geodesics by
their length. Therefore the pair correlation measure of the angles of reciprocal
geodesics is $2 R_2^{\mathfrak A}(\frac{\xi}{2})$, where $R_2^{\mathfrak A}$ was defined in the
introduction.

The parametrization \eqref{2.1} of reciprocal geodesics allows one to rewrite the
series appearing in the formula for $g_2^{\mathfrak A} (0)$ in the introduction, as a series over the
primitive reciprocal classes $\RR^{\operatorname{prim}}$:
\[
\sum\limits_{M\in
{\mathfrak S}} \left(\frac{\|M\|^2}{\sqrt{\|M\|^4-4}}-1\right)= \sum\limits_{A\in\AA}
\frac{2}{N(A)^2-1}=4\sum\limits_{\{\gamma\}\in \RR^{\operatorname{prim}}}
\sum_{n\geqslant 1}\frac{1}{N(\gamma)^{2n}-1},
\]
where we have used the fact that for a hyperbolic matrix $A$ of trace $T$ we have
\[ \sqrt{T^2-4}=N(A)-N(A)^{-1}  \text{ and  }  N(A^n)=N(A)^n.\]

One can rewrite the sum further using the arithmetic description of primitive reciprocal
geodesics given in \cite{Sar}. Namely, let $\mathcal{D}_\RR$ be the set of nonsquare
positive discriminants $2^\alpha D'$ with $\alpha\in\{0,2,3\}$ and $D'$ odd divisible only
by primes $p\equiv 1 \pmod{4}$. Then the set of primitive reciprocal classes
$\RR^{\operatorname{prim}}$ decomposes as a disjoint union of finite sets:
\[
\RR^{\operatorname{prim}}=\bigcup_{d\in \mathcal{D}_\RR} \RR^{\operatorname{prim}}_d
\]
with $|\RR^{\operatorname{prim}}_d|= \nu(d)$, the number of genera of binary
quadratic forms of discriminant $d$. For $d\in \mathcal{D}_\RR$, $\nu(d)$ equals
$2^{\lambda-1}$, or respectively $2^{\lambda}$ depending on whether $8\nmid d$, or
respectively $8\mid d$, and $\lambda$ is the number of distinct odd prime factors of $d$.
Each class $\{\gamma\} \in \RR^{\operatorname{prim}}_d$ has
\[
N(\gamma)=\alpha_d=\textstyle\frac{1}{2} (u_0+v_0\sqrt{d})
\]
with $(u_0,v_0)$ the minimal positive solution
to Pell's equation $u^2-d v^2=4$. We then have
\[
\sum\limits_{\{\gamma\}\in \RR^{\operatorname{prim}}}
\sum_{n\geqslant 1}\frac{1}{N(\gamma)^{2n}-1}=\sum\limits_{d\in \mathcal{D}_\RR }
\sum_{n\geqslant 1}\frac{\nu(d)}{\alpha_d^{2n}-1}.
\]

In the same way, by Lemma \ref{L13} the pair correlation measure $R_2^{\mathfrak T} (\xi)$ in Theorem 1
can be written for
$\xi\leqslant 1$ as a sum over classes $\{\gamma\}\in
\RR^{\operatorname{prim}}$, where each summand depends only on $\xi$ and $N(\gamma)$.

\section{Reduction to the first quadrant}\label{Section3}

In this section we establish notation in use throughout the paper, and we reduce the pair correlation problem to
angles in the first quadrant. A similar reduction can be found in \cite{Ch}, in the context of visibility problems
for the hyperbolic lattice centered at $i$.

For each $g=\left( \begin{smallmatrix} a & b \\ c & d \end{smallmatrix}\right) \in SL_2 (\R)$ define
the quantities
\begin{equation}\label{3.1}
\begin{split}
& X_g=a^2+b^2 ,\qquad Y_g=c^2+d^2,\qquad Z_g =ac+bd, \qquad
T_g =X_g+Y_g=\| g\|^2 , \\
& \Phi (g)=\Re (gi)=\frac{Z_g}{Y_g} ,\qquad
\epsilon_g=\epsilon_{T_g} =\textstyle\frac{1}{2} \Big( T_g-\sqrt{T_g^2-4}\, \Big) .
\end{split}
\end{equation}


The upper half-plane $\H$ is partitioned into the following four quadrants:
\begin{equation*}
\begin{split}
{\bf I} & = \{ z\in \H: \Re z >0, \vert z\vert < 1\}, \quad
{\bf II} =\{ z\in \H: \Re z >0, \vert z\vert >1\},\\
{\bf III} & =\{ z\in\H: \Re z<0, \vert z\vert >1\} ,\quad
{\bf IV}= \{ z\in\H: \Re z <0, \vert z\vert <1\} .
\end{split}
\end{equation*}
Note that all the points $g i$ for $g\in \Gamma$ lie in one of the four \emph{open}
quadrants, with the exception of $i$ itself. This follows from the relation
\begin{equation}\label{3.2}
X_gY_g-Z_g^2=1,
\end{equation}
which will be often used.

In this paragraph simply take $X=X_g$, $Y=Y_g$, $Z=Z_g$, $\theta=\theta_g$.
A direct calculation shows that the center of the circle through $i$ and $gi$ is
$\alpha=\frac{X-Y}{2Z}$, leading to
\begin{equation*}
\tan \theta_g =-\frac{1}{\alpha}=\frac{2Z_g}{Y_g-X_g} ,\quad \forall \theta_g \in [-\pi,\pi].
\end{equation*}
Plugging this into $\tan (\frac{\theta}{2})=\frac{\tan\theta}{1+\sqrt{1+\tan^2\theta}}$ if
$|\theta | <\frac{\pi}{2}$ and respectively $\tan( \frac{\theta}{2})=-\frac{1+\sqrt{1+\tan^2 \theta}}{\tan\theta}$
if $\frac{\pi}{2} < |\theta| <\pi$, and employing \eqref{3.2},
$| gi| <1 \Longleftrightarrow X<Y$, and $\Re (\gamma i) > 0 \Longleftrightarrow Z>0$,
we find the useful formulas
\begin{equation}\label{3.3}
\Psi (g):=\tan \bigg(
\frac{\theta_g}{2}\bigg) =\frac{\sqrt{T_g^2-4}+X_g-Y_g}{2Z_g} =\frac{X_g -\epsilon_g}{Z_g} =\frac{Z_g}{Y_g-\epsilon_g},
\quad \forall \theta_g \in [-\pi,\pi].
\end{equation}

We denote
$\gamma =\left(\begin{smallmatrix} a & b \\ c & d \end{smallmatrix}\right)$,
$\widetilde{\gamma} =\left(\begin{smallmatrix} d & c \\ b & a \end{smallmatrix}\right)$,
$s=\left(\begin{smallmatrix} 0 & -1 \\ 1 & 0 \end{smallmatrix}\right)$.
Let $\gamma\in \Gamma$, $\gamma \neq I,s$. For $\gamma i$ to be in the right half-plane we
need $\Re (\gamma i) >0$.
This is equivalent with $ac+bd >0$ and implies $ac\geqslant 0$, $bd \geqslant 0$ because
$abcd=bc+(bc)^2 \geqslant 0$.
Since $ac \geqslant 0$ without loss of generality we will assume $a\geqslant 0$ and
$c\geqslant 0$
(otherwise consider $-\gamma$ instead).
Without loss of generality assume $b\geqslant 0$, $d\geqslant 0$ as well
(otherwise can consider
$-\gamma s=\left( \begin{smallmatrix} -b & a \\ -d & c \end{smallmatrix}\right)$
instead since $\gamma i=\gamma si$), so we can assume $\gamma$ has only nonnegative
entries.

\begin{figure}
\begin{center}
\includegraphics{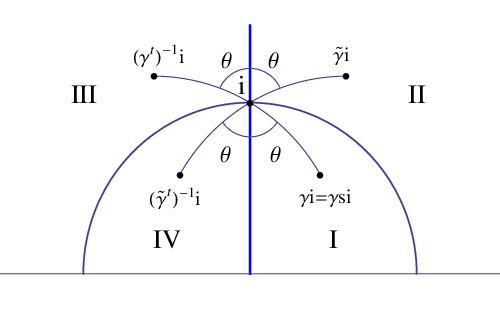}
\caption{Two symmetric geodesics through $i$}
\label{Figure2}
\end{center}
\end{figure}

If $a,b,c,d \geqslant 0$ and $ad-bc=1$, then $\frac{c}{a}$ and $\frac{d}{b}$ are both
$\leqslant 1$ or both $\geqslant 1$ (since open intervals between consecutive Farey fractions are either
nonintersecting or one contains the other). Since $\gamma i \in {\bf I} \Longleftrightarrow
a^2 +b^2 <c^2 +d^2$,
it follows that both $\frac{a}{c}$ and $\frac{b}{d}$ are $\leqslant 1$ for
$\gamma i\in {\bf I}$.
We conclude that among the eight matrices $\pm \gamma$, $\pm \gamma s$, $\pm
\widetilde{\gamma}$,
$\pm \widetilde{\gamma} s$, which have symmetric angles (see Figure \ref{Figure2}), the
one for which $\gamma i$ is in quadrant ${\bf I}$ can be chosen such that
\begin{equation*}
a,b,c,d\geqslant 0\quad \mbox{\rm and} \quad
0\leqslant \frac{b}{d} <\frac{a}{c} \leqslant 1
\end{equation*}
The set of such matrices $\gamma$ is denoted $\Gamma_{\bf I}$.

Consider the subset $\RRR_Q$ of $\Gamma_{\bf I}$ consisting of matrices with entries at most $Q$, that is
\begin{equation*}
\RRR_Q:=\left\{ \left( \begin{matrix} p^\prime & p \\ q^\prime & q
\end{matrix}\right) \in \Gamma :
0\leqslant p,p^\prime,q,q^\prime \leqslant Q,\ \frac{p}{q} <\frac{p^\prime}{q^\prime}
\leqslant 1\right\} ,
\end{equation*}
and its subset $\wMM_Q$ consisting of those $\gamma$ with
$\|\gamma\|\leqslant Q$. The cardinality $B_Q$ of $\wMM_Q$ is estimated in Corollary \ref{C8} as
$B_Q \sim \frac{3Q^2}{8}$, in agreement with formula (58) in \cite{Sar} for the number of reciprocal geodesics of
length at most $x= Q^2$.

Let $\FF_Q$ be the set of Farey fractions $\frac{p}{q}$ with $0\leqslant p \leqslant
q\leqslant  Q$ and $(p,q)=1$. The \emph{Farey tessellation} consists of semicircles on the
upper half plane connecting Farey fractions $0\leqslant
\frac{p}{q} < \frac{p'}{q'} \leqslant 1$ with $p'q-pq'=1$. We associate to matrices
$\gamma\in \RRR_Q$ with entries as above the
arc in the Farey tessellation connecting $\frac{p}{q}$ and $\frac{p'}{q'}$, and conclude
that
\begin{equation*}
\# \RRR_Q =2\# \FF_Q-3 =\frac{Q^2}{\zeta (2)}+O(Q\ln Q).
\end{equation*}

\section{The coincidence of the pair correlations of \texorpdfstring{$\Phi$}{Phi} and
\texorpdfstring{$\Psi$}{Psi}}\label{Section4}

In this section we show that the limiting pair correlations of the sets $\{ \Psi (\gamma)\}$ and
$\{ \Phi (\gamma)\}$ ordered by $\| \gamma\| \rightarrow\infty$ do coincide. The proof uses properties of the
Farey tessellation, via the correspondence between elements of $\RRR_Q $ and arcs in the Farey tessellation
defined at the end of Section \ref{Section3}.

For $\gamma=\left( \begin{smallmatrix} p' & p \\ q' & q
\end{smallmatrix}\right)\in \RRR_Q$,  set
$\gamma_-=\frac{p}{q}$, $\gamma_+ =\frac{p^\prime}{q^\prime}$. From \eqref{3.1},
\eqref{3.3}, and the inequalities $X_\gamma<Z_\gamma<Y_\gamma$, $2Y_\gamma>T_\gamma$ and
$\epsilon_\gamma < \frac{1}{T_\gamma}$,
we have:
\begin{equation}\label{4.1}
\Psi (\gamma)-\Phi (\gamma)=\frac{Z_\gamma}{Y_\gamma (\epsilon_\gamma^{-1}Y_\gamma-1)}\ll \frac{1}{\| \gamma\|^4},
\end{equation}
\begin{equation}\label{4.2}
\gamma_- < \Phi (\gamma) < \Psi (\gamma) < \gamma_+ .
\end{equation}

Denote by $\RR_Q^\Psi (\xi)$,
respectively $\RR_Q^\Phi (\xi)$, the number of pairs $(\gamma,\gamma^\prime)\in \wMM_Q^2$, $\gamma\neq \gamma^\prime$, such that
$0\leqslant \Psi (\gamma)-\Psi
(\gamma^\prime) \leqslant \frac{\xi}{Q^2}$, respectively $0\leqslant \Phi (\gamma)-\Phi
(\gamma^\prime) \leqslant \frac{\xi}{Q^2}$.
For fixed $\beta \in ( \frac{2}{3},1)$, consider also
\begin{equation*}
\NN^\Psi_{Q,\xi,\beta} := \# \big\{ (\gamma,\gamma^\prime)\in\wMM_Q^2 :
Q^2 \vert \Psi (\gamma)-\Psi (\gamma^\prime)\vert \leqslant \xi ,\ \|\gamma\|
\leqslant Q^{\beta} \big\} .
\end{equation*}
and the similarly defined $\NN^\Phi_{Q,\xi,\beta}$. The trivial inequality
\begin{equation*}
\RR_Q^\Phi (\xi) \leqslant 2\NN^{\Phi}_{Q,\xi,\beta}  +
\# \big\{ (\gamma,\gamma^\prime)\in\wMM_Q^2 :\gamma\neq \gamma^\prime, Q^2 \vert \Phi (\gamma)-\Phi (\gamma^\prime)\vert \leqslant
\xi,\ \| \gamma\|,\|\gamma^\prime\| \geqslant Q^\beta \big\}
\end{equation*}
and the estimate in \eqref{4.1} show that there exists a universal constant $\kappa >0$
such that
\begin{equation*}
\RR_Q^\Phi (\xi)  \leqslant 2\NN^\Phi_{Q,\xi,\beta}  +\# \big\{
(\gamma,\gamma^\prime)\in\wMM_Q^2 : \gamma\neq\gamma^\prime,\
-2\kappa Q^{-4\beta} \leqslant \Psi (\gamma) -\Psi (\gamma^\prime) \leqslant
\xi Q^{-2} +2\kappa Q^{-4\beta} \big\},
\end{equation*}
showing that
\begin{equation}\label{4.3}
\RR_Q^\Phi (\xi) \leqslant 2\NN^\Phi_{Q,\xi,\beta} +\RR_Q^\Psi ( 2\kappa Q^{2-4\beta}) +
\RR_Q^\Psi ( \xi+2\kappa Q^{2-4\beta} ) .
\end{equation}
In a similar way we show that
\begin{equation}\label{4.4}
\RR_Q^\Psi (\xi) \leqslant 2\NN^\Psi_{Q,\xi,\beta} +\RR_Q^\Phi ( 2\kappa Q^{2-4\beta}) +
\RR_Q^\Phi ( \xi+2\kappa Q^{2-4\beta} ) .
\end{equation}

We first prove that $\NN^\Phi_{Q,\xi,\beta}$ and $\NN^\Psi_{Q,\xi,\beta}$ are much smaller
than $Q^2$. For this goal and for latter use, it is important to divide pairs $(\gamma, \gamma')\in \RRR_Q^2$ in
three cases, depending on the relative position of their associated arcs in the Farey tessellation (it is well
known that two arcs in the Farey tessellation are nonintersecting):
\begin{enumerate}
\item[(i)] The arcs  corresponding to $\gamma$ and $\gamma^\prime$ are \emph{exterior}, i.e. $\gamma_+
\leqslant \gamma^\prime_-$
or $\gamma^\prime_+ \leqslant \gamma_-$.
\item[(ii)] $\gamma^\prime \lessapprox \gamma$, i.e. $\gamma_- \leqslant \gamma^\prime_- <
\gamma^\prime_+ \leqslant \gamma_+$.
\item[(iii)] $\gamma \lessapprox \gamma^\prime$, i.e. $\gamma^\prime_- \leqslant \gamma_-
< \gamma_+ \leqslant \gamma^\prime_+$.
\end{enumerate}

\begin{proposition}\label{P3}
$\NN^\Phi_{Q,\xi,\beta} \ll Q^{1+\beta}\ln Q$ and $\NN^\Psi_{Q,\xi,\beta} \ll
Q^{1+\beta}\ln Q$.
\end{proposition}

\begin{proof}
$\NN^\Phi_{Q,\xi,\beta}$ and $\NN^\Psi_{Q,\xi,\beta}$ are increasing as an
effect of enlarging $\wMM_Q$ to $\RRR_Q$,
so for this proof we will replace $\wMM_Q$ by $\RRR_Q$. We only consider
$\NN^\Phi_{Q,\xi,\beta}$ here. The proof for
the bound on  $\NN^\Psi_{Q,\xi,\beta}$ is identical. Both rely on \eqref{4.1} and \eqref{4.2}.

Denote $K=[ \xi ]+1$. Upon \eqref{4.2}
and $\vert r^\prime -r\vert \geqslant \frac{1}{Q^2}$, $\forall r,r^\prime \in \FF_Q$,
$r\neq r^\prime$, it follows that
if $\gamma_+ \leqslant \gamma^\prime_-$ and
$\vert \Phi (\gamma^\prime)-\Phi (\gamma)\vert \leqslant \frac{\xi}{Q^2}$, then $\# (\FF_Q
\cap [\gamma_+, \gamma^\prime_- ]) \leqslant K+1$. In
particular $\gamma^\prime_- =\gamma_+$ when $0<\xi <1$.

We now consider the three cases enumerated before the statement of the proposition.

(i) The arcs corresponding to $\gamma$ and $\gamma^\prime$ are exterior. Without loss of
generality assume $\gamma_+ \leqslant \gamma_-^\prime$.
If $i$ is such that $\gamma_+=\gamma_i$, the $i^{\operatorname{th}}$ element of $\FF_Q$,
then
$\gamma^\prime_-=\gamma_{i+r}=\frac{p_{i+r}}{q_{i+r}}$ for some $r$ with $0\leqslant r<K$. The
equality $p^\prime_+ q^\prime_- - p^\prime_- q^\prime_+ =1$
shows that if $\gamma^\prime_-=\frac{p^\prime_-}{q^\prime_-}$ is fixed, then $q^\prime_+$
(and therefore $\gamma^\prime_+=\frac{p^\prime_+}{q^\prime_+}$)
is uniquely determined in intervals of length $\leqslant q^\prime_-$. Since $q^\prime_\pm
\leqslant Q$, it follows that the number of choices for
$q^\prime_+$ is actually $\leqslant \frac{Q}{q^\prime_-} +1=\frac{Q}{q_{i+r}}+1$.

When $0<\xi <1$ one must have $\gamma_-^\prime =\gamma_+$. Knowing $q_-$ and $q_+$ would
uniquely
determine the matrix $\gamma$. Then there will be at most $\frac{Q}{q_+}+1$ choices for
$\gamma^\prime$, so the total contribution
of this case to $\NN^\Phi_{Q,\xi,\beta}$ is
\begin{equation*}
\leqslant \sum_{1\leqslant q_-\leqslant Q^\beta} \sum_{1\leqslant q_+ \leqslant Q^\beta}
\bigg( \frac{Q}{q_+} +1\bigg) \ll Q^{1+\beta} \ln Q .
\end{equation*}
When $\xi \geqslant 1$ denote by $q_i,q_{i+1},\ldots,q_{i+K}$ the denominators of
$\gamma_i,\gamma_{i+1},\ldots,\gamma_{i+K}$.
Since $q_i<Q^\beta$, we have $\gamma_{i+K}-\gamma_i \leqslant \frac{K}{Q}\leqslant \frac{1}{Q^\beta} \leqslant \frac{1}{q_i}\leqslant 1-\gamma_i$,
showing that $i+K < \# \FF_Q$ so long as $Q\gg_\xi 1$.
As noticed in \cite{HT}, $q_{j+2}=[ \frac{Q+q_j}{q_{j+1}}] q_{j+1}-q_j$. As in
\cite{BCZ} consider
$\kappa (x,y) :=[ \frac{1+x}{y}]$ and $\TT_k =\{ (x,y)\in (0,1]^2: x+y >1, \kappa
(x,y)=k\}$.
Let $Q$ large enough so that $\delta_0:=Q^{\beta -1} <\frac{1}{2K+3}$. Then $\frac{q_i}{Q}
<\delta_0$ and it is plain
(cf. also \cite{BCZ}) that $\frac{q_{i+1}}{Q} >1-\delta_0$, $\kappa (\frac{q_i}{Q},\frac{q_{i+1}}{Q})=1$, and $\kappa (
\frac{q_{i+1}}{Q},\frac{q_{i+2}}{Q}) = \cdots
=\kappa ( \frac{q_{i+K}}{Q},\frac{q_{i+K+1}}{Q}) =2$ because
$q_{i+1},q_{i+2},\ldots,q_{i+K+1}$ must form an arithmetic progression. Hence $(
\frac{q_i}{Q},\frac{q_{i+1}}{Q})\in\TT_1$
and $( \frac{q_{i+1}}{Q},\frac{q_{i+2}}{Q}) ,\ldots ,(
\frac{q_{i+K}}{Q},\frac{q_{i+K+1}}{Q})\in \TT_2$,
showing in particular that $\min\{ q_{i+1},\ldots,q_{i+K}\} >\frac{Q}{3}$. Therefore $\max
\{ \frac{Q}{q_{i+1}},\ldots,\frac{Q}{q_{i+K}}\}<3$
and the contribution of this case to $\NN^\Phi_{Q,\xi,\beta}$ is
\begin{equation*}
\leqslant \sum_{1\leqslant q_- \leqslant Q^\beta} \sum_{1\leqslant q_+ \leqslant
Q^\beta} 4K \ll_\xi Q^{2\beta} .
\end{equation*}

\begin{figure}[ht]
\begin{center}
\unitlength 0.4mm
\begin{picture}(0,110)(0,-12)

\path(-130,0)(130,0)
\dottedline{2}(-77,0)(-77,100)

\put(0,0){\arc{200}{3.14}{0}}

\put(50,0){\arc{100}{3.14}{0}}
\put(-50,0){\arc{100}{3.14}{0}}

\put(16.666,0){\arc{33.333}{3.14}{0}}
\put(66.666,0){\arc{66.666}{3.14}{0}}
\put(-16.666,0){\arc{33.333}{3.14}{0}}
\put(-66.666,0){\arc{66.666}{3.14}{0}}


\put(41.666,0){\arc{16.666}{3.14}{0}}
\put(75,0){\arc{50}{3.14}{0}}
\put(10,0){\arc{20}{3.14}{0}}
\put(26.666,0){\arc{13.333}{3.14}{0}}

\put(-41.666,0){\arc{16.666}{3.14}{0}}
\put(-75,0){\arc{50}{3.14}{0}}
\put(-10,0){\arc{20}{3.14}{0}}
\put(-26.666,0){\arc{13.333}{3.14}{0}}

\put(-100,-12){\makebox(0,0){$\frac{p}{q}$}}
\put(100,-12){\makebox(0,0){$\frac{p^\prime}{q^\prime}$}}
\put(0,-12){\makebox(0,0){$\frac{p+p^\prime}{q+q^\prime}$}}
\put(-32,-12){\makebox(0,0){{\small $\frac{2p+p^\prime}{2q+q^\prime}$}}}
\put(33.33,-12){\makebox(0,0){{\small $\frac{p+2p^\prime}{q+2q^\prime}$}}}
\put(-52,-12){\makebox(0,0){{\small $\frac{3p+p^\prime}{3q+q^\prime}$}}}
\put(-55,92){\makebox(0,0){$\gamma$}}
\put(-77,-12){\makebox(0,0){$\Phi (\gamma)$}}

\end{picture}
\end{center}
\caption{The Farey tessellation.} \label{Figure3}
\end{figure}
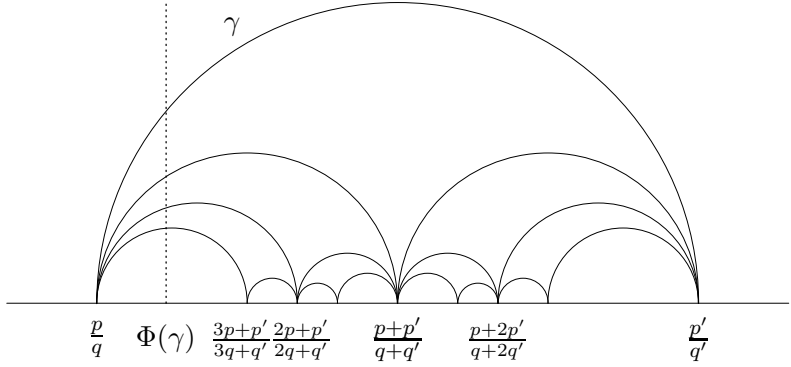

(ii) $\gamma^\prime \lessapprox \gamma$. Let $i$ be the unique index for which $\gamma_i <\Phi (\gamma)
<\gamma_{i+1}$ with
$\gamma_i < \gamma_{i+1}$ successive elements in $\FF_Q$.
Since $\vert \Phi(\gamma^\prime)-\Phi(\gamma)\vert \leqslant \frac{\xi}{Q^2}$,
either $\gamma^\prime_- < \Phi (\gamma) < \gamma^\prime_+$ or there exists $0\leqslant r
\leqslant K$ with $\gamma^\prime_+ =\gamma_{i-r}$ or with
$\gamma^\prime_- =\gamma_{i+r}$. In both situations the arc corresponding to the matrix $\gamma^\prime$ will cross at
least one of the vertical lines above
$\gamma_{i-K}, \ldots ,\gamma_i, \gamma_{i+1},\ldots ,\gamma_{i+K}$. A glance at the Farey
tessellation provides an upper
bound for this number $N_{\gamma,K}$ of arcs $\gamma^\prime\in\RRR_Q$.
Actually one sees that the set $\CC_{\gamma,L}$ consisting of $2+2^2+\cdots+2^L$ arcs
obtained from $\gamma$ by
iterating the mediant construction $L=[ \frac{Q}{\min\{ q_-,q_+\}}]+1$ times ($\gamma$
is not enclosed in $\CC_{\gamma,L}$)
contains the set $\{ \gamma^\prime\in \RRR_Q: \gamma^\prime \lessapprox \gamma,
\gamma^\prime \neq \gamma\}$.
The former set contains at most $L$ arcs that are intersected by each vertical direction,
and so $N_{\gamma,K} \leqslant (2K+1)L$.
Therefore, the contribution of this case to $\NN^\Phi_{Q,\xi,\beta}$ is (first choose
$\gamma$, then $\gamma^\prime$)
\begin{equation*}
\leqslant \sum_{1\leqslant q\leqslant Q^\beta} \sum_{1\leqslant q^\prime \leqslant
Q^\beta}
(2K+1) \bigg( \frac{Q}{\min\{ q,q^\prime\}} +1 \bigg) \ll_\xi Q^{1+\beta} \ln Q .
\end{equation*}

(iii) $\gamma \lessapprox \gamma^\prime$. We necessarily have $\gamma =\gamma^\prime
M$, with $M\in {\mathfrak S}$. In particular this yields
$\gamma_\pm^\prime \in \FF_{Q^\beta}$. Considering the sub-tessellation defined only by
arcs connecting
points from $\FF_{Q^\beta}$, one sees that the number of arcs intersected by a vertical
line $x=\alpha$
with $\gamma_-=\frac{p}{q} <\alpha <\gamma_+ =\frac{p^\prime}{q^\prime}$, $\gamma
=(\gamma_-,\gamma_+)\in \FF_{Q^\beta}$
is equal to $s(q,q^\prime)$, the sum of digits in the continued fraction expansion of
$\frac{q}{q^\prime}<1$ when $q<q^\prime$, and respectively to $s(q^\prime,q)$ when
$q^\prime <q$.
A result from \cite{YK} yields in particular that
\begin{equation*}
\sum_{0<q<q^\prime\leqslant Q^\beta} s(q,q^\prime) \ll Q^{2\beta} \ln^2 Q,
\end{equation*}
and therefore
\begin{equation*}
\# \{ (\gamma,\gamma^\prime) \in \RRR^2_{Q^\beta} : \gamma \lessapprox \gamma^\prime \}
\leqslant 1+2 \sum_{0<q<q^\prime \leqslant Q^\beta} s(q,q^\prime) \ll Q^{2\beta} \ln^2 Q.
\end{equation*}
This completes the proof of the proposition.
\end{proof}

Proposition \ref{P3} and inequalities \eqref{4.3} and \eqref{4.4} provide

\begin{cor}\label{C4}
For each $\beta \in ( \frac{2}{3},1)$,
\begin{equation*}
\RR_Q^\Psi (\xi) = \RR_Q^\Phi \big( \xi+O_\xi (Q^{2-3\beta}) \big)
+\RR_Q^\Phi \big( O_\xi (Q^{2-3\beta})\big)  +O_\xi (Q^{1+\beta}\ln Q) .
\end{equation*}
\end{cor}

\section{A decomposition of the pair correlation of
\texorpdfstring{$\{\Phi(\gamma)\}$}{Phi[gamma]}} \label{Section5}

To estimate $\RR_Q^\Phi(\xi)$, recall the correspondence between elements of $\RRR_Q$ and arcs in the Farey
tessellation from the end of Section \ref{Section3}. We consider the
following two possibilities for the arcs associated to a pair $(\gamma,
\gamma')\in \wMM_Q^2$:
\begin{itemize}
\item[(i)] One of the arcs corresponding to $\gamma$ and $\gamma^\prime$ contains the other.
\item[(ii)] The arcs corresponding to $\gamma$ and $\gamma^\prime$ are exterior (possibly tangent).
\end{itemize}
Denoting by $R_Q^\Cap( \xi), R_Q^{\cap\, \cap} (\xi) $ the number of pairs in each case we have
$$\RR_Q^\Phi(\xi)=R_Q^\Cap( \xi)+  R_Q^{\cap\, \cap} (\xi).
$$

\subsection{One of the arcs contains the other}\label{sec5.1}
In this case we have either $\gamma=\gamma^\prime M$ or $\gamma^\prime =\gamma M$ with $M\in {\mathfrak S}$
(see also Figure \ref{Figure4}). For each $M\in \Gamma$ define
\begin{equation}\label{5.1}
\Xi_M (x,y) = \frac{xy(Y_M-X_M)+(x^2-y^2)Z_M}{(x^2+y^2)(x^2 X_M+y^2 Y_M +2xy Z_M)},
\end{equation}
where $X_M,Y_M,Z_M$ are defined in \eqref{3.1}. A direct calculation leads
for $\gamma=\left( \begin{smallmatrix} p^\prime & p \\ q^\prime & q \end{smallmatrix}\right)$ to
\begin{equation}\label{5.2}
\Phi (\gamma)-\Phi (\gamma M)=\Xi_M (q^\prime ,q).
\end{equation}
Tho remarks are in order now. First notice that
$X_M \neq Y_M$ for any $M\in {\mathfrak S}$ because of \eqref{3.2} and $X_M,Y_M,Z_M\geqslant 1$.
Secondly, we also have
\begin{equation}\label{5.3}
\Phi(\gamma)\neq \Phi (\gamma M).
\end{equation}
Suppose ad absurdum that $\Phi(\gamma)=\Phi(\gamma M)$. Then \eqref{5.2} and \eqref{5.1} yield
$\frac{2Z_M}{Y_M -X_M} =\frac{2qq^\prime}{q^2-q^{\prime 2}}$, that is
$\tan\theta_M =\tan 2\theta$, where $\theta =\tan^{-1}(\frac{q^\prime}{q})\in (0,\pi)$ and
$\theta_M \in (0,\pi)$ because $Z_M >0$. This gives
\[
\frac{X_M -\epsilon_{M}}{Z_M} =\tan \bigg( \frac{\theta_M}{2}\bigg)=\tan\theta \in \Q,
\]
hence $\sqrt{(X_M+Y_M)^2-4}=X_M+Y_M -2\epsilon_M \in \Q$, which is not possible because $X_M+Y_M\geqslant 3$.

From \eqref{5.2} and \eqref{5.3} we now infer

\begin{lemma}\label{L5}
The number of pairs $(\gamma,\gamma^\prime) \in \wMM_Q^2$,
$\gamma\neq\gamma^\prime$ ,   with
$0\leqslant \Phi (\gamma)-\Phi (\gamma^\prime) \leqslant \frac{\xi}{Q^2}$ and
$\gamma \lessapprox \gamma^\prime$ or $\gamma^\prime \lessapprox \gamma$ (with the notation introduced before Proposition
\ref{P3}) is given by
\begin{equation*}
R_Q^\Cap( \xi) = \# \bigg\{ (\gamma,\gamma M)\in\wMM_Q^2 :\gamma=\left(
\begin{matrix} p^\prime & p \\ q^\prime & q
\end{matrix}\right), \ M\in{\mathfrak S} , \
\vert \Xi_M (q^\prime ,q)\vert \leqslant \frac{\xi}{Q^2} \bigg\} .
\end{equation*}
\end{lemma}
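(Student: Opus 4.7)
The plan is to establish two independent claims from which Lemma \ref{L5} follows essentially by bookkeeping: (i) nested pairs of Farey arcs in $\wMM_Q\times\wMM_Q$ are exactly parametrized by pairs $(\gamma,\gamma M)$ with $\gamma\in\wMM_Q$ and $M\in\mathfrak{S}$, where $\gamma$ corresponds to the outer arc; and (ii) the identity $\Phi(\gamma)-\Phi(\gamma M)=\Xi_M(q',q)$ asserted in \eqref{5.2}. Granted both, for each nested pair $(\gamma,\gamma')$ with $0\le\Phi(\gamma)-\Phi(\gamma')\le\xi/Q^2$ one designates as $g$ whichever of $\gamma,\gamma'$ has the outer arc and writes the inner one as $gM$, $M\in\mathfrak{S}$. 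The signed condition $0\le\Phi(\gamma)-\Phi(\gamma')$ is then absorbed by a symmetry: depending on whether $\gamma$ or $\gamma'$ is outer, the number $\Xi_M(q',q)=\Phi(g)-\Phi(gM)$ is nonnegative or nonpositive, so $|\Phi(\gamma)-\Phi(\gamma')|\le\xi/Q^2$ is exactly $|\Xi_M(q',q)|\le\xi/Q^2$.

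For claim (i) I would exploit the Stern--Brocot structure of the Farey tessellation. A direct check shows that, for $\gamma=\left(\begin{smallmatrix}p'&p\\q'&q\end{smallmatrix}\right)\in\RRR_Q$, right-multiplication by $L$ replaces $\gamma_+=p'/q'$ by the mediant $(p+p')/(q+q')$ and hence produces the arc in the left half of $\gamma$'s arc, while right-multiplication by $R$ produces the arc in the right half. Iterating, together with the fact recalled in the introduction that $\mathfrak{S}\cup\{I\}$ enumerates the matrices of $\mathrm{SL}_2(\Z)$ with nonnegative entries, identifies $\{\gamma M:M\in\mathfrak{S}\}$ with the set of arcs strictly nested inside $\gamma$'s arc. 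Equivalently, $\gamma'\lessapprox\gamma$ with $\gamma'\ne\gamma$ if and only if $\gamma^{-1}\gamma'\in\mathfrak{S}$, and by symmetry the same dichotomy covers the case $\gamma\lessapprox\gamma'$ by swapping the roles.

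For claim (ii), the identity is a mechanical computation from the definitions \eqref{3.1}. Writing $\gamma M$ in entries and grouping in $(X_M,Y_M,Z_M)$, one obtains
\[
Y_{\gamma M}=q'^2 X_M+2q'qZ_M+q^2Y_M,\qquad Z_{\gamma M}=p'q'X_M+(p'q+pq')Z_M+pq\,Y_M,
\]
so that $\Phi(\gamma)-\Phi(\gamma M)=Z_\gamma/Y_\gamma-Z_{\gamma M}/Y_{\gamma M}$ combines into a single fraction. Expanding the numerator and using the determinant identity $p'q-pq'=1$ to collapse the cross terms leaves exactly $qq'(Y_M-X_M)+(q'^2-q^2)Z_M$ over $(q'^2+q^2)(q'^2X_M+2q'qZ_M+q^2Y_M)$, matching the definition \eqref{5.1} of $\Xi_M(q',q)$. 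The main subtlety, such as it is, is not in either ingredient individually but in their combination: one must check that the bijection $(\gamma,\gamma')\mapsto(g,M)$ with $g$ outer is well defined and accounts for exactly one of the two possible orderings in the hypothesis $0\le\Phi(\gamma)-\Phi(\gamma')$, which is the reason the absolute value appears in the right-hand side of the lemma.
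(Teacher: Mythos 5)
Your proposal is correct and follows essentially the same route as the paper: the paper's proof consists of the observation that one arc nested inside another forces $\gamma'=\gamma M$ for some $M\in\mathfrak{S}$, together with the identity \eqref{5.2}, and you supply both of these with more justification (the Stern--Brocot mediant argument for the first, the explicit expansion of $Y_{\gamma M}$ and $Z_{\gamma M}$ for the second, which I checked and is right). One small remark: you correctly identify the one subtlety — that the assignment $(\gamma,\gamma')\mapsto(g,gM)$ with $g$ the outer arc must account for exactly one of the two orderings satisfying $0\leqslant\Phi(\gamma)-\Phi(\gamma')$ — but you assert rather than verify it. The potential failure is $\Xi_M(q',q)=0$, in which case both orderings would be counted on the left but only one $(g,gM)$ on the right. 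In fact this never happens: $\Xi_M(q',q)=0$ with $u=q'/q\in\Q$ would give $u-u^{-1}=(X_M-Y_M)/Z_M$, which forces $(X_M-Y_M)^2+4Z_M^2=T_M^2-4$ to be a perfect rational square, and $T^2-4$ is not a perfect square for any integer $T\geqslant 3$. The paper does not spell this out either, so this is not a gap relative to the paper's standard of rigor, but since you labeled it the ``main subtlety'' it is worth closing.
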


\begin{figure}[ht]
\begin{center}
\unitlength 0.4mm
\begin{picture}(0,75)(10,-5)

\path(-100,0)(100,0)
\put(0,0){\arc{150}{3.14}{0}}
\put(10,0){\arc{70}{3.14}{0}}

\put(-60,60){\makebox(0,0){{$\gamma$}}}
\put(15,45){\makebox(0,0){{$\gamma^\prime =\gamma M$}}}

\put(-75,-12){\makebox(0,0){{$\frac{p}{q}$}}}
\put(75,-12){\makebox(0,0){{\small $\frac{p^\prime}{q^\prime}$}}}
\put(-25,-12){\makebox(0,0){{$\frac{Bp^\prime+Dp}{Bq^\prime+Dq}$}}}
\put(50,-12){\makebox(0,0){{$\frac{Ap^\prime+Cp}{Aq^\prime+Cq}$}}}
\end{picture}
\end{center}
\caption{\small The case $\gamma^\prime \lessapprox \gamma$} \label{Figure4}
\end{figure}

\subsection{Exterior arcs}\label{sec5.2}
In this case we have $\gamma,\gamma^\prime
\in  \wMM_Q$, $\gamma^\prime_- \geqslant \gamma_+$. Let $\ell\geqslant 0$ be the number of Farey arcs in $\FF_Q$
connecting the arcs corresponding to $\gamma,\gamma^\prime $ (see Figure \ref{Figure5}). In other words, writing $\gamma=\Big(
\begin{smallmatrix} p' & p \\ q' & q
\end{smallmatrix}\Big), \gamma'=\Big( \begin{smallmatrix} p_{\ell+1} & p_{\ell} \\ q_{\ell+1} & q_{\ell}
\end{smallmatrix}\Big)$, we have that $\frac{p_0}{q_0}:=\frac{p'}{q'},
\frac{p_1}{q_1},\ldots,\frac{p_\ell}{q_\ell}$  are consecutive elements in $\FF_Q$. Setting also $
\frac{p_{-1}}{q_{-1}}:=\frac{p}{q}$, it follows that $q_{i}=k_i
q_{i-1} -q_{i-2}$, where $k_i\in \N$, $i=1,\ldots,\ell$, and $k_i=\big[ \frac{Q+q_{i-2}}{q_{i-1}}\big]$ for
$2\leqslant i\leqslant \ell$.

The fractions $\frac{p_\ell}{q_\ell},
\frac{p_{\ell+1}}{q_{\ell+1}}$ are not necessarily consecutive in $\FF_Q$, but we have $q_{\ell+1}=Kq_\ell-q_{\ell
-1}$, $K\leqslant k_{\ell+1}
=\big[ \frac{Q+q_\ell}{q_{\ell+1}}\big]$. It follows that $\gamma^\prime = \gamma M$ with
$M=\Big( \begin{smallmatrix} k_1 & 1 \\ -1 & 0 \end{smallmatrix}\Big)
\cdots \Big( \begin{smallmatrix} k_\ell & 1 \\ -1 & 0 \end{smallmatrix}\Big)\Big( \begin{smallmatrix} K & 1 \\
-1 & 0 \end{smallmatrix}\Big).$

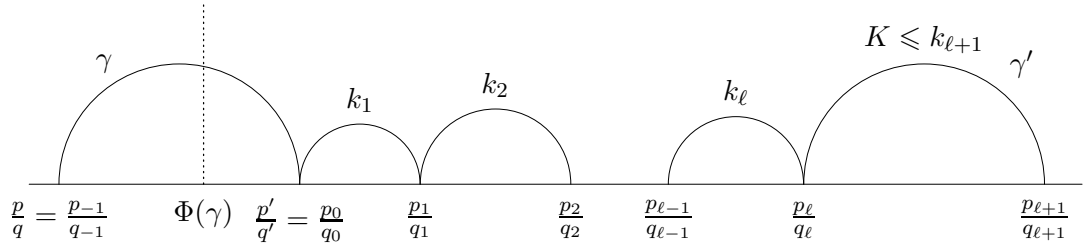
\begin{figure}[ht]
\begin{center}
\unitlength 0.4mm
\begin{picture}(0,80)(30,-12)

\path(-150,0)(200,0)
\dottedline{2}(-92,0)(-92,60)

\put(-100,0){\arc{80}{3.14}{0}}
\put(-40,0){\arc{40}{3.14}{0}}
\put(5,0){\arc{50}{3.14}{0}}

\put(85,0){\arc{45}{3.14}{0}}
\put(147.5,0){\arc{80}{3.14}{0}}

\put(-140,-12){\makebox(0,0){$\frac{p}{q}=\frac{p_{-1}}{q_{-1}}$}}
\put(-20,-12){\makebox(0,0){{$\frac{p_1}{q_1}$}}}
\put(30,-12){\makebox(0,0){{$\frac{p_2}{q_2}$}}}
\put(-60,-12){\makebox(0,0){{$\frac{p^\prime}{q^\prime}=\frac{p_0}{q_0}$}}}
\put(62.5,-12){\makebox(0,0){$\frac{p_{\ell-1}}{q_{\ell-1}}$}}
\put(107.5,-12){\makebox(0,0){$\frac{p_\ell}{q_\ell}$}}
\put(187.5,-12){\makebox(0,0){$\frac{p_{\ell+1}}{q_{\ell+1}}$}}
\put(-125,40){\makebox(0,0){$\gamma$}}
\put(180,40){\makebox(0,0){$\gamma^\prime$}}
\put(147.5,48){\makebox(0,0){$K\leqslant k_{\ell +1}$}}
\put(85,30.5){\makebox(0,0){$k_\ell$}}
\put(5,33){\makebox(0,0){$k_2$}}
\put(-40,28){\makebox(0,0){$k_1$}}
\put(-92,-10){\makebox(0,0){$\Phi (\gamma)$}}

\end{picture}
\end{center}
\caption{\small The case where $\gamma$ and $\gamma^\prime$ are exterior} \label{Figure5}
\end{figure}

We have $\ell <\xi$ because
\begin{equation*}
\Phi (\gamma^\prime)-\Phi (\gamma) > \sum_{i=1}^\ell \frac{1}{q_{i-1} q_i} \geqslant \frac{\ell}{Q^2} .
\end{equation*}
It is also plain to see that
\begin{equation}\label{5.4}
 \frac{p^\prime}{q^\prime}-\Phi (\gamma) =\frac{q}{q^\prime (q^2+q^{\prime 2})} ,\quad
 \Phi (\gamma^\prime)-\frac{p_\ell}{q_\ell} =\frac{q_{\ell+1}}{q_\ell (q_\ell^2+q_{\ell+1}^2)} .
\end{equation}
The last equality in \eqref{5.4} and $q_\ell^2+q_{\ell+1}^2 \leqslant Q^2$ yield for $\ell \geqslant 1$
\begin{equation*}
\begin{split}
\frac{\xi}{Q^2} & \geqslant \Phi (\gamma^\prime)-\Phi (\gamma) \geqslant  \frac{1}{q_{\ell-1}q_\ell} +
\frac{q_{\ell+1}}{q_\ell (q_\ell^2+q_{\ell+1}^2)} \\ &
\geqslant \frac{1}{q_{\ell-1}q_\ell}+\frac{Kq_\ell -q_{\ell-1}}{q_\ell Q^2}
=\frac{K}{Q^2}+\frac{Q^2-q_{\ell-1}^2}{q_{\ell-1}q_\ell Q^2}\geqslant \frac{K}{Q^2},
\end{split}
\end{equation*}
while if $\ell=0$ we have
$$\Phi (\gamma^\prime)-\Phi (\gamma)= \frac{K(q^{\prime 2}+qq_1)}{(q^2+
q^{\prime 2})(q^{\prime 2}+q_1^{2})} \geqslant \frac{K}{Q^2},$$
showing that $K <\xi$. Notice also that \eqref{5.4} yields
\begin{equation*}
\Phi(\gamma^\prime)-\Phi (\gamma) = \frac{q}{q^\prime (q^2+q^{\prime 2})}
+\sum_{i=1}^\ell \frac{1}{q_{i-1}q_i} + \frac{q_{\ell+1}}{q_\ell (q_\ell^2+q_{\ell+1}^2)} .
\end{equation*}

Let $\TT=\{ (x,y)\in (0,1]^2 : x+y >1\}$ and consider the map
\begin{equation*}
T:(0,1]^2 \rightarrow \TT,\quad T(x,y)=\bigg( y,\bigg[ \frac{1+x}{y}\bigg] y-x\bigg),
\end{equation*}
whose restriction to $\TT$ is bijective and area-preserving \cite{BCZ}.
Consider the iterates $T^i=(L_{i-1},L_i)$ and the functions $K_i = [ \frac{1+L_{i-2}}{L_{i-1}}]$ if $i=1,\ldots,\ell$,
$K_{\ell+1}=K$, and $L_{\ell+1}=KL_\ell-L_{\ell-1}$. One has
\begin{equation}\label{5.5}
\begin{split}
& L_{-1} (x,y) =x , \quad L_0 (x,y)=y,\quad (x,y)\in (0,1]^2, \\
& 0 < L_i (x,y)\leqslant 1,\ \  i\geqslant 0, \quad L_{i-1}(x,y)+L_i (x,y) >1 ,\ \  i=1,\ldots,\ell,\quad (x,y)\in \TT, \\
& L_i (x,y) = K_i (x,y) L_{i-1}(x,y)-L_{i-2}(x,y),\quad i=1,\ldots,\ell+1,\quad  (x,y)\in \TT, \\
& (q_{i-1},q_i) =QT^i \bigg(\frac{q}{Q},\frac{q^\prime}{Q}\bigg) =\Bigg( QL_{i-1}\bigg( \frac{q}{Q},\frac{q^\prime}{Q}\bigg) ,
QL_i \bigg( \frac{q}{Q}, \frac{q^\prime}{Q}\bigg) \Bigg),\quad i=0,1,\ldots,\ell, \\
& q_{\ell+1}=Kq_\ell -q_{\ell -1} =Q \Bigg( K L_\ell \bigg( \frac{q}{Q},\frac{q^\prime}{Q}\bigg)
-L_{\ell-1} \bigg( \frac{q}{Q},\frac{q^\prime}{Q}\bigg)\Bigg) .
\end{split}
\end{equation}
Define also the function
\begin{equation}\label{5.6}
\Upsilon_{\ell,K}:(0,1]^2 \rightarrow (0,\infty),\quad
\Upsilon_{\ell,K} =\frac{L_{-1}}{L_0 (L_{-1}^2+L_0^2)} +\sum_{i=1}^\ell \frac{1}{L_{i-1}L_i}
+\frac{L_{\ell+1}}{L_\ell \big( L_\ell^2+L_{\ell+1}^2\big)} .
\end{equation}

We proved the following statement.

\begin{lemma}\label{L6}
The number $R_Q^{\cap\, \cap} (\xi)$ of pairs
$(\gamma,\gamma^\prime)$ of exterior (possibly tangent) arcs in $\wMM_Q$ for which
$0< \Phi (\gamma^\prime)-\Phi (\gamma) \leqslant \frac{\xi}{Q^2}$
is given by
\begin{equation}\label{5.7}
R_Q^{\cap\,\cap} (\xi) = \sum\limits_{\substack{\ell\in [0,\xi) \\ K \in [1,\xi)}} \#
\left\{ \left( \begin{matrix} p^\prime & p \\ q^\prime & q \end{matrix}\right) :
\begin{matrix}
0\leqslant p\leqslant q,\ 0\leqslant p^\prime \leqslant q^\prime,\ p^\prime q-pq^\prime =1 \\
p^2+p^{\prime 2}+q^2+q^{\prime 2} \leqslant Q^2 , \  0<Kq_\ell -q_{\ell -1} \leqslant Q \\
p_\ell^2+q_\ell^2 +(K p_\ell-p_{\ell-1})^2 +(Kq_\ell -q_{\ell-1})^2 \leqslant Q^2 \\
\Upsilon_{\ell,K} \big( \frac{q}{Q},\frac{q^\prime}{Q}\big) \leqslant \xi
\end{matrix}\right\},
\end{equation}
where the sums are over integers in the given intervals, and $q_{-1}=q$, $q_0=q'$.
\end{lemma}

\section{A lattice point estimate}\label{Sect6}

\begin{lemma}\label{L7}
Suppose that $\Omega$ is a region in $\R^2$ of area $A(\Omega)$ and rectifiable boundary of length $\ell (\partial \Omega)$.
For every integer $r$ with $(r,q)=1$ and $1\leqslant L\leqslant q$
\begin{equation*}
\NN_{\Omega,q,r} := \# \big\{ (a,b)\in \Omega \cap \Z^2 : ab\equiv r \hspace{-6pt}\pmod{q}\big\} =
\frac{\varphi (q)}{q^2}  A(\Omega) +\EE_{\Omega,L,q},
\end{equation*}
where, for each $\veps >0$,
\begin{equation*}
\EE_{\Omega,L,q} \ll_\veps \frac{q^{1/2+\veps} A(\Omega)}{L^2} + \bigg( 1+\frac{\ell (\partial\Omega)}{L}\bigg)
\bigg( \frac{L^2}{q} +q^{1/2+\veps}\bigg) .
\end{equation*}
\end{lemma}

\begin{proof}
Replacing $\Z^2$ by $L\Z^2$ in the estimate (for a proof see \cite[Thm. 5.9]{Nar})
\begin{equation*}
\big\{ (m,n)\in\Z^2 : (m,m+1) \times (n,n+1) \cap \partial \Omega \neq \emptyset \big\}
\ll 1+\ell (\partial \Omega) ,
\end{equation*}
we find that the number of squares $S_{m,n}=[Lm,L(m+1)] \times [Ln,L(n+1)]$ with
$\mathring{S}_{m,n} \cap \partial \Omega \neq \emptyset$ is
$\ll 1+\frac{1}{L} \ell (\partial \Omega)$. Therefore
\begin{equation*}
\# \big\{ (m,n)\in \Z^2 : (Lm,L(m+1)) \times (Ln,L(n+1)) \subseteq \Omega \big\}
=\frac{A(\Omega)}{L^2} +O\bigg( 1+\frac{\ell(\partial \Omega)}{L}\bigg) .
\end{equation*}
Weil's estimates on Kloosterman sums \cite{We} extended to composite moduli
in \cite{Hoo} and \cite{Est} show that each such square contains $\frac{\varphi (q)}{q^2}  L^2 +O_\veps (q^{1/2 +\veps})$
pairs of integers $(a,b)$ with $ab \equiv r \hspace{-3pt}\pmod{q}$ (see, e.g. \cite[Lemma 1.7]{BCZ0} for details).
Combining these two estimates we find
\begin{equation*}
\NN_{\Omega,q,r} = \bigg( \frac{A(\Omega)}{L^2} +O \Big( 1+\frac{\ell (\partial \Omega)}{L}\Big) \bigg)
\bigg( \frac{\varphi (q)}{q^2}  L^2 +O(q^{1/2+\veps})\bigg) =
\frac{\varphi (q)}{q^2} A(\Omega) +\EE_{\Omega,q,L},
\end{equation*}
as desired.
\end{proof}

\begin{cor}\label{C8}
$\displaystyle \quad \# \wMM_Q =\frac{3Q^2}{8}  +O_\veps ( Q^{11/6+\veps}) .$
\end{cor}

\begin{proof}
Note first that one can substitute $\frac{pq^\prime}{q}$ for
$p^\prime =\frac{1+pq^\prime}{q}$ in the definition of $\wMM_Q$, replacing the inequality
$\| \gamma\|^2 \leqslant Q^2$ by $(q^2+q^{\prime 2})(q^2+p^2) \leqslant Q^2 q^2$ without altering the error term.
Applying Lemma \ref{L7} to $\Omega_q =\{ (u,v)\in [0,q]\times [0,Q] : (q^2+u^2)(q^2+v^2)\leqslant Q^2 q^2 \}$ and
$L=q^{5/6}$, and using $A(\Omega_q)\leqslant Qq$ and $\ell (\Omega_q) \leqslant 2(Q+q)\leqslant 4Q$, we infer
\begin{equation*}
\# \wMM_Q = \sum_{q=1}^Q \frac{\varphi (q)}{q} \cdot \frac{A(\Omega_q)}{q} +O_\veps ( Q^{11/6+\veps}) .
\end{equation*}
Standard M\" obius summation (see, e.g., \cite[Lemma 2.3]{BCZ0}) applied to the decreasing function $h(q)=\frac{1}{q} A(\Omega_q)$ with $\| h\|_\infty \leqslant Q$
and the change of variable $(q,u,v)=(Qx,Qxy,Qz)$ further yield
\begin{equation*}
\# \wMM_Q =\frac{Q^2}{\zeta (2)} \operatorname{Vol} (S)+O_\veps ( Q^{11/6+\veps}) ,
\end{equation*}
where
\begin{equation*}
S=\{ (x,y,z)\in [0,1]^3: (1+y^2)(x^2+z^2) \leqslant 1 \} .
\end{equation*}
The substitution $y=\tan \theta$ yields
\begin{equation*}
\operatorname{Vol} (S) = \int_0^{\pi/4} \frac{d\theta}{\cos^2\theta} A \big( \{ (x,z)\in [0,1]^2 : x^2+z^2 \leqslant \cos^2 \theta \}\big)
=\frac{\pi^2}{16} ,
\end{equation*}
completing the proof of the corollary.
\end{proof}

The error bound in Corollary \ref{C8} can be improved using spectral methods (see Corollary 12.2 in
Iwaniec's book \cite{Iw}). We have given the proof since it is the
prototype of applying Lemma \ref{L7} to the counting problems of the next section.

\section{Pair correlation
of \texorpdfstring{$\{\Phi(\gamma)\}$}{Phi[gamma]}}\label{Section7}
The main result of this section is Theorem \ref{T2}, where we obtain explicit formulas for the pair correlation of
the quantities $\{\Phi(\gamma)\}$ in terms of volumes of three dimensional bodies. The discussion is divided in
two  cases, as in Sec. \ref{Section5}.

\subsection{One of the arcs contains the other}

The formula for $R_Q^\Cap$ in Lemma \ref{L5} provides
\begin{equation}\label{7.1}
R_Q^\Cap (\xi) = \sum\limits_{M\in {\mathfrak S}} \NN_{M,Q}(\xi) ,
\end{equation}
where $\NN_{M,Q} (\xi)$ denotes the number of matrices
$\gamma=\left( \begin{smallmatrix} p^\prime & p \\ q^\prime & q \end{smallmatrix}\right)$ for which
\begin{equation}\label{7.2}
0\leqslant p\leqslant q,\quad 0\leqslant p^\prime \leqslant q^\prime,\quad p^\prime q-pq^\prime =1, \quad
\displaystyle \vert \Xi_M (q^\prime ,q)\vert \leqslant \frac{\xi}{Q^2}  , \quad \| \gamma M\| \leqslant Q.
\end{equation}

The first goal is to replace in \eqref{7.2} the inequality $\| \gamma M\| \leqslant Q$ by a more tractable one.
Taking $\gamma=\Big( \begin{smallmatrix} p' & p \\ q' & q \end{smallmatrix}\Big)$ and substituting
$p=\frac{p^\prime q-1}{q^\prime}$ we write, using the notation \eqref{3.1}:
\begin{equation}\label{7.3}
 \| \gamma M\|^2  =\bigg( \frac{p^{\prime 2}}{q^{\prime 2}} +1\bigg) \big(q^{\prime 2} X_M +  q^2 Y_M + 2 q
q^\prime Z_M \big)
-\frac{ (p^\prime q+pq^\prime) Y_M +2p^\prime q^\prime Z_M }{q^{\prime 2}} .
\end{equation}

The quantity $\NN_{M,Q}(\xi)$ can be conveniently related to $\wNN_{M,Q} (\xi)$, the number of integer triples $(q^\prime,q,p^\prime)$ such that
\begin{equation}\label{7.4}
\begin{cases}
0< p^\prime \leqslant q^\prime \leqslant Q, \quad 0<q\leqslant Q ,\quad
p^\prime q \equiv 1 \hspace{-6pt} \pmod{q^\prime} ,\\  \displaystyle
\vert \Xi_M (q^\prime ,q)\vert \leqslant \frac{\xi}{Q^2}  , \quad
Y_{\gamma M}=q^{\prime 2} X_M +  q^2 Y_M + 2 q q^\prime Z_M \leqslant \frac{Q^2 q^{\prime 2}}{p^{\prime 2} +q^{\prime
2}}  .
\end{cases}
\end{equation}

We next prove that given $c_0 \in (\frac{1}{2},1)$, for all $M\in {\mathfrak S}$
and $Q\geqslant 1$ with $Y_M < X_M \leqslant Q^{2c_0}$ and all $\xi >0$,
\begin{equation}\label{7.5}
\wNN_{M,Q}(\xi) \leqslant \NN_{M,Q} (\xi) \leqslant \wNN_{M,Q(1+\sqrt{2} Q^{c_0-1})} \big(\xi(1+\sqrt{2} Q^{c_0-1})^2\big).
\end{equation}
For the first inequality, note that if the integral triple $(q^\prime,q,p^\prime)$ satisfies
\eqref{7.4} then by \eqref{7.3}, $\| \gamma M\|^2 \leqslant \frac{p^{\prime 2}+q^{\prime 2}}{q^{\prime 2}} Y_{\gamma M} \leqslant Q^2$,
 and thus if we define $p:=\frac{p^\prime q-1}{q^\prime}$ then \eqref{7.2} holds. For the second inequality take
 $\gamma$ as in \eqref{7.2}. Using \eqref{7.3} we then have
 $\frac{p^{\prime 2}+q^{\prime 2}}{q^{\prime 2}}
 Y_{\gamma M} \leqslant Q^2+\frac{(p^\prime q+pq^\prime)Y_M +2p^\prime q^\prime Z_M}{q^{\prime 2}}
 \leqslant Q^2+2qY_M+2Z_M $. Using also $Z_M \leqslant Q^{2c_0}$ and
 $qY_M =\sqrt{q^2 Y_M} \sqrt{Y_M} \leqslant \sqrt{Y_{\gamma M}} \sqrt{Y_M} \leqslant Q^{1+c_0}$, we conclude
 $\frac{p^{\prime 2}+q^{\prime 2}}{q^{\prime 2}} Y_{\gamma M} \leqslant Q^2 +2Q^{1+c_0} +2Q^{2c_0}\leqslant Q^2(1+\sqrt{2}Q^{c_0-1})^2$.
 Also $\vert \Xi_M (q^\prime,q)\vert \leqslant \frac{\xi}{Q^2}=\frac{\xi(1+\sqrt{2}Q^{c_0-1})^2}{Q^2 (1+\sqrt{2}Q^{c_0-1})^2}$.
 Hence $(q^\prime,q,p^\prime)$ satisfies \eqref{7.4} with the pair $(Q,\xi)$ replaced by
 $(Q+\sqrt{2}Q^{c_0},\xi (1+\sqrt{2} Q^{c_0-1})^2 )$. This proves \eqref{7.5}.

Next we show that $\NN_{M,Q} (\xi) =0$ when $\max\{ X_M,Y_M\} \geqslant Q^{2c_0}$  and
$Q$ is large enough.

\begin{lemma}\label{L9}
Let $c_0\in ( \frac{1}{2},1)$. There exists $Q_0(\xi)$ such that whenever $M\in {\mathfrak S}$,
$\max\{ X_M,Y_M\} \geqslant Q^{2c_0}$, and $Q\geqslant Q_0(\xi)$,
\begin{equation*}
\NN_{M,Q} (\xi)=\wNN_{M,Q}(\xi)=0.
\end{equation*}
\end{lemma}

\begin{proof}
We show there are no coprime positive integer lattice points $(q^\prime,q)$ for which
\begin{equation}\label{7.6}
\vert \Xi_M (q^\prime,q)\vert \leqslant \frac{\xi}{Q^2},\quad
Y_{\gamma M}=q^{\prime 2} X_M+q^2 Y_M +2qq^\prime Z_M \leqslant Q^2 .
\end{equation}
Noting from \eqref{7.3} that $Y_{\gamma M}\leqslant \| \gamma M\|^2$, this will
ensure that $\NN_{M,Q}(\xi)=0$. The equality $\wNN_{M,Q}(\xi)=0$ follows as well from \eqref{7.4}.

Suppose $(q^\prime,q)$ is as in \eqref{7.6}, write $q^\prime i+q=(q,q^\prime)=(r\cos\theta,r\sin\theta)$,
$\theta\in ( 0,\frac{\pi}{2})$, and consider $(X,Y,Z)=(X_M,Y_M,Z_M)$,
$T=\| M\|^2=X+Y$, $U_M =\coth d(i,Mi)=\frac{T}{\sqrt{T^2-4}}$.  Since
$\sin\theta_M =\frac{2Z}{\sqrt{T^2-4}}$ and
$\cos \theta_M = \frac{Y-X}{\sqrt{T^2-4}}$,
the inequalities in \eqref{7.6} can be described as
\begin{equation}\label{7.7}
\frac{1}{\xi}\cdot \frac{\vert \sin (\theta_M -2\theta)\vert}{U_M+\cos (\theta_M -2\theta)}
\leqslant \frac{r^2}{Q^2} \leqslant
\frac{2}{\big(U_M +\cos (\theta_M -2\theta)\big)\sqrt{T^2-4}} .
\end{equation}

Denoting $\delta_M=\frac{\theta_M}{2}-\theta $, from  the first and
last fraction in \eqref{7.7} we infer
$\vert \sin  2 \delta_M \vert \ll \frac{1}{T}$. Therefore $\delta_M$ is close to 0, or to
$\pm \frac{\pi}{2}$. When $\delta_M$ is close to $0$ we have
$|\tan \delta_M|\ll \vert \delta_M\vert \ll  \vert \sin 2\delta_M \vert \ll\frac{1}{T}$.
When $\delta_M$ is close to $\pm \frac{\pi}{2}$ we similarly have
$|\delta_M\mp \frac{\pi}{2}|\ll \frac{1}{T},$ which is seen to be impossible. Indeed,
\begin{equation*}
 \frac{|\tan\delta_M|}{1+\frac{U_M-1}{1+\cos 2\delta_M}}
=\frac{\vert \sin 2\delta_M \vert}{U_M+\cos 2\delta_M} \leqslant \xi
\end{equation*}
shows that it suffices to bound from above $\frac{U_M-1}{1+\cos 2\delta_M}$, which would imply
$|\tan\delta_M|\ll \xi$, thus contradicting $|\delta_M\mp \frac{\pi}{2}|\ll \frac{1}{T}$. Since $Z$ is a positive
integer, we have $ \sin \theta_M\gg \frac{1}{T}$.
Since $\cos \theta,\sin \theta >0$ and $\theta_M\in (0,\pi)$, we have
\[\textstyle
1+\cos 2\delta_M =1+\cos (\theta_M -2\theta) \geqslant 1+\cos2\theta\cos\theta_M \geqslant 1 -\vert \cos
\theta_M \vert = 1-\sqrt{1-\sin^2 \theta_M} \gg \frac{1}{T^2} .
\]
As $U_M-1\ll \frac{1}{T^2}$, it follows that
$ \frac{U_M-1}{1+\cos 2\delta_M}\ll 1 $, contradiction.

We have thus shown that $|\delta_M| \leqslant |\tan \delta_M|\ll\frac{1}{T}$, or more precisely there exists $\Theta_0(\xi)$
continuous in $\xi$ such that $| \delta_M | \leqslant \frac{\Theta_0(\xi)}{T}$.

\noindent{Case I.} $Y>X$. Then $0<\frac{\theta_M}{2} < \frac{\pi}{4}$ and $Z =\sqrt{XY-1} <Y$.
Since $| \delta_M | \ll \frac{1}{T}\ll Q^{-2c_0}$, one has $0<\theta <\frac{\pi}{3}$ for large $Q$.
Employing the formula $\tan( \frac{\theta_M}{2}) =\frac{Z}{Y-\epsilon_T}$
with $\epsilon_T$ as in \eqref{3.1}, we infer
\begin{equation}\label{7.8}
\bigg| \frac{AC+BD}{C^2+D^2-\epsilon_T} -\frac{q^\prime}{q}\bigg|
=\vert \tan \delta_M\vert \bigg| 1+\tan\theta \tan \bigg( \frac{\theta_M}{2}\bigg) \bigg|
\ll \frac{1}{T} .
\end{equation}
Combining \eqref{7.8} with $0< \frac{Z}{Y-\epsilon_T}-\frac{Z}{Y} \ll \frac{1}{T}$
and with $\big| \frac{Z}{Y}-\frac{A+B}{C+D}\big| \leqslant \frac{1}{C^2+D^2}\ll \frac{1}{T}$, we arrive at
\begin{equation}\label{7.9}
\bigg| \frac{A+B}{C+D}-\frac{q^\prime}{q} \bigg| \ll \frac{1}{T} \leqslant Q^{-2c_0} .
\end{equation}
If nonzero, the left-hand side in \eqref{7.9} must be $\geqslant \frac{1}{q(C+D)}$. But
$q(C+D) \leqslant q\sqrt{2(C^2+D^2)} \leqslant Q\sqrt{2}$, and so $Q^{2c_0} \ll Q$, contradiction.
It remains that $q=C+D$ and $q^\prime =A+B$, which again is not possible because
$Q^{2c_0} \leqslant (C+D)^2 =q(C+D)\leqslant Q\sqrt{2}$.

\noindent{Case II.} $X>Y$. Then $\frac{\pi}{4} <\frac{\theta_M}{2} < \frac{\pi}{2}$ and $Y\leqslant \sqrt{XY-1}=Z$.
As $\vert \delta_M \vert \ll Q^{-2c_0}$, we must have $0<\frac{\pi}{2} -\theta <\frac{\pi}{3}$ for large values of $Q$.
This time we have
\begin{equation*}
\begin{split}
\bigg| \frac{Y-\varepsilon_T}{Z}-\frac{q}{q^\prime} \bigg| & =
\bigg| \tan \bigg( \frac{\pi}{2} -\frac{\theta_M}{2}\bigg) -\tan \bigg( \frac{\pi}{2} -\theta \bigg) \bigg|  \\
& =\vert \tan \delta_M\vert \bigg| 1+\tan \bigg( \frac{\pi}{2}-\frac{\theta_M}{2}\bigg)
\tan \bigg( \frac{\pi}{2}-\theta \bigg) \bigg| \leqslant (1+\sqrt{3}) \vert \tan\delta_M \vert \ll \frac{1}{T},
\end{split}
\end{equation*}
which leads (use $D\geqslant C \Longleftrightarrow B\geqslant A)$ to
\begin{equation}\label{7.10}
\begin{split}
\bigg| \frac{C+D}{A+B}-\frac{q}{q^\prime} \bigg| & \ll \frac{1}{T} +\frac{\epsilon_T}{Z} + \bigg| \frac{Y}{Z}-\frac{C+D}{A+B}\bigg|
\ll \frac{1}{T} +\frac{\vert D-C\vert}{(A+B)(AC+BD)} \\
& \leqslant \frac{1}{T} +\frac{1}{(A+B)^2} \leqslant \frac{1}{T} +\frac{1}{X} \ll \frac{1}{T} \ll Q^{-2c_0}  .
\end{split}
\end{equation}
As in Case I this is not possible because $q^\prime (A+B)\leqslant q^\prime \sqrt{2X} \leqslant Q\sqrt{2}$
and $(A+B)^2 \geqslant Q^{2c_0}$.
\end{proof}

Our next goal is to apply Lemma \ref{L7}, assuming $Y_M < X_M \ll Q^{2c_0}$ and taking $r=1$, to the set
$\Omega=\Omega_{M,q^\prime,\xi}$ of pairs $(u,v)\in (0,Q]\times (0,q^\prime]$ that satisfy
\begin{equation}\label{7.11}
\vert \Xi_M (q^\prime,u)\vert \leqslant \frac{\xi}{Q^2}  \quad \mbox{\rm and} \quad
q^{\prime 2} X_M +  u^2 Y_M + 2 u q^\prime Z_M\leqslant \frac{Q^2 q^{\prime 2}}{v^2+q^{\prime 2}} .
\end{equation}

The next related statement will be useful:
\begin{lemma}\label{L10}
There exist continuous functions $T_0(\xi)$ and $C(\xi)$ such that,
for any matrix $M\in {\mathfrak S}$ with $Y_M <X_M$ and $T=\| M\|^2 >T_0(\xi)$,
the projection on the first coordinate of the set $\Omega_{M,q^\prime,\xi}$
is contained in the interval $(0,C(\xi)q^\prime]$.
\end{lemma}

\begin{proof}
Using polar coordinates $(u,q^\prime)=(r\cos\theta,r\sin\theta)$, $\theta \in (0,\frac{\pi}{2})$, we see that inequalities
\eqref{7.11} imply \eqref{7.7}. This shows that for the purpose of this lemma we can replace
$\Omega_{M,q^\prime,\xi}$ by the set of $(u,v)\in (0,Q]\times (0,q^\prime]$ satisfying \eqref{7.7}.
Therefore we can use all estimates from the first part of the proof of Lemma \ref{L9} (because they only rely on \eqref{7.7},
the integrality of $q$ being used only at the end).

Note also that $Y=Y_M<X=X_M$ and $Z^2=XY -1$ yield $Y \leqslant Z$.
Replacing $q$ by $u$ in the first part of the proof of Lemma \ref{L9}, so that $\tan \theta=\frac{u}{q^\prime}$, $\theta\in (0,\frac{\pi}{2})$,
we see (cf. last line before Case 1) that $\vert \delta_M\vert \leqslant \frac{\Theta (\xi)}{T}$ for some continuous function $\Theta$.
Next we look into the first estimates in Case 2 and see that there exists $T_0(\xi)$ depending continuously on $\xi$ such that,
for any $M$ with $T=\| M\|^2 >T_0(\xi)$, one has $0<\frac{\pi}{2}-\theta <\frac{\pi}{3}$ and $| \frac{u}{q^\prime} -\frac{Y-\epsilon_T}{Z} | \leqslant
(1+\sqrt{3})| \tan \delta_M|$. In conjunction with the $\delta_M$-bound, this shows the existence of a continuous function $C_0(\xi)$ such that
$| u-\frac{Y-\epsilon_T}{Z} q^\prime| \leqslant C_0(\xi) q^\prime$, showing that $u\leqslant (1+C_0(\xi))q^\prime$.
\end{proof}

Although this will not be used in this paper, we remark that if $\gamma$ is as in
\eqref{7.2}, then \eqref{7.4}
is satisfied by the triple $(q^\prime,q,p^\prime)$ with the pair $(Q,\xi)$ replaced by
 $(Q+\sqrt{2}Q^{c_0},\xi (1+\sqrt{2} Q^{c_0-1})^2 )$, by the proof of \eqref{7.5}.
Therefore Lemma \ref{L10} shows that $\frac{q}{q^\prime} \ll_\xi 1$ (with a different
implicit constant than $C(\xi)$ from Lemma \ref{L10}).

Next notice that, as $Q\rightarrow\infty$,
\begin{equation}\label{7.12}
\sum\limits_{\substack{M\in {\mathfrak S} \\ \max\{ X_M,Y_M\} \leqslant Q^{2c_0}}} \hspace{-15pt} \max\{ X_M,Y_M\}^{-\sigma} \ll_\sigma Q^{(2-2\sigma)c_0} ,\quad 0<\sigma <1.
\end{equation}
This follows immediately from\footnote{Here $A$ and $B$ determine uniquely the matrix $M=\left( \begin{smallmatrix}
A & B \\ C & D \end{smallmatrix}\right)$.}
\begin{equation*}
\sum\limits_{\substack{M\in {\mathfrak S} \\ Y_M <X_M \leqslant Q^{2c_0}}} X_M^{-\sigma} \leqslant \sum_{1\leqslant A^2+B^2 \leqslant Q^{2c_0}} (A^2+B^2)^{-\sigma} \leqslant
\iint_{x^2+y^2 \leqslant 2Q^{2c_0}} (x^2+y^2)^{-\sigma}dx\, dy \ll_\sigma Q^{(2-2\sigma)c_0}  .
\end{equation*}

Assume now that $Y_M < X_M \leqslant Q^{2c_0}$. When $T=\| M\|^2 >T_0(\xi)$ we apply Lemma \ref{L10}.
The definition of $\Omega$, seen after some obvious scaling as a section subset
in the body $S_{M,\xi}$ defined in \eqref{7.15} below,
shows that the range of $u$ consists of a union of intervals in $[0,Q]$ with a (universally) bounded number of components and of total
Lebesgue measure $\ll_\xi q^\prime$. This gives $A(\Omega)\ll_\xi \frac{Qq^\prime}{\sqrt{X_M}}$ and
$\ell (\partial \Omega) \ll_\xi q^\prime +q^\prime \ll \frac{Q}{\sqrt{X_M}}$.
Taking $L=q^{\prime 5/6}$ we find $Q\gg X_M^{1/2} q^{\prime 1/6}$, and the error provided by Lemma \ref{L7} is
$\EE_{\Omega,L,q^\prime} \ll_\varepsilon Q q^{\prime -1/6+\varepsilon} X_M^{-1/2}$.
Note also that in this case $A\geqslant C$ and $B\geqslant D$.
As a result, applying \eqref{7.12} with $\sigma=\frac{11}{12}$, the error is seen to add up to
\begin{equation*}
\sum\limits_{\substack{A^2+B^2 \leqslant Q^{2c_0} \\ \| M\|^2 >T_0(\xi)}}\sum_{q^\prime\leqslant Q/\sqrt{X_M}} \EE_{\Omega,q^{\prime 5/6},q^\prime}\ll_\varepsilon
Q \sum_{A^2+B^2 \leqslant Q^{2c_0}} \frac{1}{X_M^{1/2}} \bigg( \frac{Q}{X_M^{1/2}}\bigg)^{5/6+\varepsilon}
\ll_\varepsilon Q^{(11+c_0)/6+\varepsilon}.
\end{equation*}
Lemma \ref{L7} now provides
\begin{equation}\label{7.13}
\widetilde{\NN}_M(Q,\xi) =\sum_{1\leqslant q^\prime \leqslant Q/\sqrt{X_M}} \frac{\varphi(q^\prime)}{q^{\prime 2}} A(\Omega_{M,q^\prime,\xi})
+O_\veps ( Q^{(11+c_0)/6+\varepsilon}).
\end{equation}
The situation $\| M\|^2 \leqslant T_0(\xi)$ (in this case there are $O_\xi(1)$ choices for $M$) is directly
handled by Lemma \ref{L7}. The same choice for $L$ provides $\EE_{\Omega,q^{\prime 5/6},q^\prime} \ll_\varepsilon Qq^{\prime -1/6+\varepsilon}$.
These error terms sum up to $O_{\veps,\xi} (Q^{11/6+\veps})$ in this situation.

Next we will apply M\" obius summation (cf., e.g., \cite[Lemma 2.3]{BCZ0})  to the function
$h_1(q^\prime)=\frac{1}{q^\prime} A(\Omega_{M,q^\prime,\xi})$. Note that
$\frac{1}{Q}h_1(q^\prime)$ represents the area of the cross-section the body
\begin{equation}\label{7.15}
S_{M,\xi} := \left\{ (x,y,z)\in [0,1]^3 :  \vert \Xi_M (x,y)\vert \leqslant
\xi,\
x^2 X_M+y^2 Y_M+2xy Z_M \leqslant \frac{1}{1+z^2}\right\}
\end{equation}
by the plane $x=\frac{q^\prime}{Q}$. The intersection of the projection of $S_{M,\xi}$ onto
the plane $z=0$ with a vertical line $x=c$ is
bounded by a quartic and an ellipse, showing that the cross-section
function $c\mapsto  A_{M,\xi}(c):=\mathrm{Area}(S_{M,\xi}\cap  \{x=c\})$ is continuous and piecewise
$C^1$ on $[0,1]$
and the number of critical points of $A_{M,\xi}$ is bounded by a universal constant $C$ independently of $M$ and $\xi$.
The graph on the right of Figure \ref{Figure6} illustrates one of the possible cases that
can arrise, when $A_{M,\xi}(c)$ has the most number of critical points, showing that we
can take $C=3$.

\begin{figure}[ht]
\begin{center}
\includegraphics*[scale=0.6, bb=0 1 280 240]{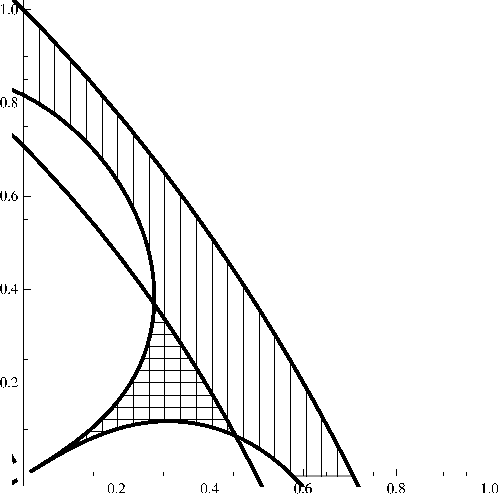}
\includegraphics*[scale=0.8, bb=0 1 240 200]{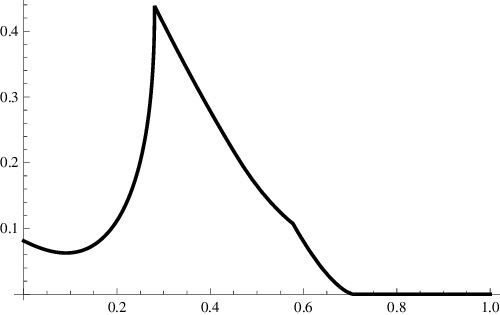}
\caption{The cross-sections of $S_{M,\xi}$ for $z=0$ (hashed vertically) and
$z=1$ (hashed horizontally), and the function $c\mapsto A_{M,\xi}(c)$,
for $M= R$ and $\xi=1.5$. }
\label{Figure6}
\end{center}
\end{figure}

In particular the total variation of $h_1$ on $[0,Q]$ is
$\leqslant (C+1) (\sup_{[0,Q]} h_1 -\inf_{[0,Q]} h_1) \ll
\| h_1\|_\infty \ll_\xi \frac{Q}{\sqrt{X_M}}$, and so we infer
\[
\sum_{1\leqslant q^\prime \leqslant Q/\sqrt{X_M}} \frac{\varphi(q^\prime)}{q^{\prime 2}}
A(\Omega_{M,q^\prime,\xi}) =\frac{1}{\zeta(2)}
\int_0^{Q/\sqrt{X_M}} h_1(q^\prime)\, dq^\prime +O \bigg( \frac{Q}{\sqrt{X_M}} \ln Q\bigg)
.
\]
Using also the change of variables $(q^\prime,u,v)=(Qx,Qy,Qxz)$,
$(x,y,z)\in [0,1]^3$, \eqref{7.13}, \eqref{7.5} and \eqref{7.12}, we find that the
contribution to $R_Q^\Cap (\xi)$ of
matrices $M$ with $Y_M<X_M$ is
\begin{equation}\label{7.14}
\begin{split}
\frac{1}{\zeta (2)}
\sum\limits_{\substack{M\in {\mathfrak S} \\ Y_M <X_M\leqslant Q^{2c_0} }} &
\Bigg( \int_0^{Q/\sqrt{X_M}}\hspace{-5pt}  A(
\Omega_{M,q^\prime,\xi})\frac{dq^\prime}{q^\prime} +O\Big(
\frac{Q\ln Q}{X_M^{1/2}} \Big)\Bigg)
 +O_{\veps,\xi} \big(Q^{(11+c_0)/6+\veps}\big)   \\
& = \frac{Q^2}{\zeta(2)}
\sum\limits_{\substack{M\in {\mathfrak S} \\ Y_M < X_M \leqslant Q^{2c_0} }}
\operatorname{Vol} (S_{M,\xi})+O_{\veps,\xi} \big( Q^{1+c_0+\veps}
+Q^{(11+c_0)/6+\veps}\big) .
\end{split}
\end{equation}

With $\eta =\left( \begin{smallmatrix} 0 & 1 \\ 1 & 0 \end{smallmatrix} \right)$ notice
the following important symmetries:
\begin{equation}\label{7.16}
\eta M\eta=\left( \begin{smallmatrix}
D & C \\ B & A \end{smallmatrix}\right) \quad \mbox{\rm and} \quad
\Xi_{\eta M\eta} (y,x) = -\Xi_M (x,y),
\end{equation}
showing that the reflection $(x,y,z)\mapsto (y,x,z)$ maps $S_{M,\xi}$
bijectively onto $S_{\eta M \eta,\xi}$.

The situation $X_M<Y_M$ is handled similarly using \eqref{7.16},
which results in reversing the roles of $q$ and $q^\prime$ with Lemma \ref{L7} applied for $r=-1$.

Next we give upper bounds for $\operatorname{Vol}(S_{M,\xi})$. Take $(x,y,z)=(r\cos t,r\sin t,z) \in S_{M,\xi}$.
The proof of \eqref{7.9} and \eqref{7.10} does not use the integrality of $q^\prime$ and $q$, so denoting
$\omega_M =\frac{C+D}{A+B}<1$ if $Y_M <X_M$ and $\omega_M=\frac{A+B}{C+D}<1$ if $X_M <Y_M$, we find
$y\ll x\ll X_M^{-1/2}\ll T^{-1}$ and
$\big| \frac{y}{x}-\omega_M\big| \ll \frac{1}{T}$ in the former case, and respectively
$x\ll y \ll Y_M^{-1/2}\ll T^{-1}$ and $\big| \frac{x}{y}-\omega_M\big| \ll \frac{1}{T}$ in the latter case.
Writing the area in polar coordinates
we find $r^2 \ll T^{-1}$ and
\begin{equation}\label{7.17}
\begin{split}
\operatorname{Vol} (S_{M,\xi}) & \leqslant A \big( \big\{ (x,y)\in [0,1]^2:
\exists\, z\in [0,1],\ (x,y,z)\in S_{M,\xi}\big\}\big) \\ & \leqslant \frac{1}{2}
\int_{\omega_M -\xi T_M^{-1}}^{\omega_M+\xi T_M^{-1}} 2T_M^{-1}\ dt
=\frac{2\xi}{T_M^2} =\frac{2\xi}{\| M\|^4} .
\end{split}
\end{equation}
The bound \eqref{7.17} and a reasoning similar to the proof of \eqref{7.12} yields
\begin{equation}\label{7.18}
\sum\limits_{M\in {\mathfrak S}}
\operatorname{Vol} (S_{M,\xi}) < \infty \quad \mbox{\rm and} \quad
\sum\limits_{\substack{M\in {\mathfrak S} \\ \max \{ X_M,Y_M\} \geqslant Q^{2c_0}}}
\operatorname{Vol} (S_{M,\xi}) \ll_\xi Q^{-2c_0} .
\end{equation}
From \eqref{7.14}, \eqref{7.18} and $c_0 \in ( \frac{1}{2},1)$, we infer
\begin{equation}\label{7.19}
R_Q^\Cap (\xi)= \frac{Q^2}{\zeta(2)}
\sum\limits_{\substack{M\in {\mathfrak S}}}
\operatorname{Vol} (S_{M,\xi}) +O_\veps (Q^{(11+c_0)/6+\veps}) .
\end{equation}

The volume of $S_{M,\xi}$ can be evaluated in closed form using the substitution
$z=\tan t$:
\begin{equation}\label{7.20}
\operatorname{Vol} (S_{M,\xi} ) = \int_{0}^{\pi/4} B_{M} (\xi,t)
\frac{dt}{\cos^2 t} ,
\end{equation}
where $B_{M}(\xi,t)$ is the area of the region
\begin{equation}\label{7.21}
 \bigg\{ (r \cos \theta,
r \sin \theta)\in [0,1]^2: \frac{1}{\xi}\cdot \frac{|\sin
(2\theta-\theta_M)|}{U_T+\cos (2\theta -\theta_M)}\leqslant r^2 \leqslant
\frac{1}{\sqrt{T^2-4}}\cdot
\frac{2\cos^2 t}{U_T+\cos(2\theta -\theta_M)} \bigg\},
\end{equation}
with $\theta_M\in (0,\frac{\pi}{2} )$ having $\sin
\theta_M=\frac{2Z_M}{\sqrt{T^2-4}}$ and
$U_T=\frac{T}{\sqrt{T^2-4}}$ (for brevity we write $T=T_M$).

The following elementary fact will be useful to prove the differentiability of the
volumes as functions of $\xi$.

\begin{lemma}\label{L11}
Assuming $G,H:K\rightarrow \R$ continuous functions on a compact set $K\subset \R^k$
and denoting $x_+=\max\{ x,0\}$, the formula
\begin{equation*}
V(\xi) := \int_K ( \xi -G(v))_+ H(v)\, dv,\quad \xi\in\R ,
\end{equation*}
defines a $C^1$ map on $\R$ and
\begin{equation*}
V^\prime (\xi) = \int_{G<\xi} H(v)\, dv .
\end{equation*}
\end{lemma}
Using equation \eqref{7.20} we find
\begin{equation}\label{7.22}
\operatorname{Vol} (S_{M,\xi} ) =\frac{1}{2} \int_0^{\pi/4} dt \int_{0}^{\pi/2} d\theta\,
 \frac{\big(2/\sqrt{T^2-4}- |\sin
(2\theta-\theta_M)|/(\xi\cos^2 t)\big)_+}{U_T+\cos (2\theta-\theta_M)}
\end{equation}
and applying Lemma \ref{L11} we obtain

\begin{cor}\label{C12} The function
$\xi\mapsto\operatorname{Vol} (S_{M,\xi} )$ is $C^1$.
\end{cor}
For a smaller range for $\xi$ we have the following explicit formula.

\begin{lemma}\label{L13}
Suppose that $\xi\leqslant Z_M$. The volume of
$S_{M,\xi}$ only depends on $\xi$ and $T=\|M\|^2$:
\[
\operatorname{Vol} (S_{M,\xi} )= \int_0^{\pi/4} \hspace{-3pt}  \tan^{-1}\bigg(
 \frac{\sqrt{\Delta}-\sqrt{\Delta-4\xi^2\cos^4 t}}{2\alpha\xi \cos^2 t}\bigg)
+\frac{1}{2\xi\cos^2 t}\ln\bigg(1-\frac{\sqrt{\Delta}-\sqrt{\Delta-4\xi^2\cos^4
t}}{2\alpha}\bigg) dt,
\]
where $\Delta=T^2-4$ and $\alpha=\frac{1}{2} (T+\sqrt{T^2-4})$.
\end{lemma}

\begin{proof}The two polar curves in
\eqref{7.21} intersect for
$|\sin(2\theta-\theta_M)|
=\frac{2\xi}{\sqrt{T^2-4}}\cos^2 t$, that is for $\theta_\pm=\frac{\theta_M}{2}\pm
\alpha$ with $\alpha=\alpha(\xi,t)\in ( 0,\frac{\pi}{4})$ such that $\sin
2\alpha= \frac{2\xi}{\sqrt{T^2-4}}\cos^2 t$. Since $\sin \theta_M=\frac{2Z}{\sqrt{T^2-4}}$,
the assumption $ \xi\leqslant Z$ ensures $\alpha<\theta_M$. Thus
$\theta_{\pm}\in [ 0,\frac{\pi}{2} )$, and a change of variables
$\theta=\frac{\theta_M}{2}+u$ yields
\[
B_{M,\xi}(t)=\frac{1}{2}\int_{-\alpha}^{\alpha}
\bigg( \frac{2\cos^2 t}{\sqrt{T^2-4}}\cdot
\frac{1}{U_T+\cos(2u)}-\frac{|\sin (2u)|}{\xi(U_T+\cos (2u))}\bigg) d u.
\]
The integrand is even and both integrals can be computed exactly, yielding the formula
above.
\end{proof}
In particular Lemma \ref{L13} yields $\operatorname{Vol} (S_{M,\xi}) \ll \frac{\xi}{T^2}$,
providing an alternative proof for \eqref{7.17}.

\subsection{Exterior arcs}
Referring to the notation of Section \ref{sec5.2}, we first replace
the inequalities $p^2+p^{\prime 2}+q^2+q^{\prime 2} \leqslant Q^2$ and
$p_\ell^2+q_\ell^2 +(K p_\ell-p_{\ell-1})^2 +(Kq_\ell -q_{\ell-1})^2 \leqslant Q^2$
in \eqref{5.7} by simpler ones. Using $p^\prime q-pq^\prime=1$ we can replace $p$ by
$\frac{p^\prime q}{q^\prime}$ in the former, while $p_{\ell-1}$ can be replaced by
$\frac{p_\ell q_{\ell-1}}{q_\ell}$ in the latter. As a result these two inequalities can be substituted in
\eqref{5.7} by
\begin{equation}\label{7.23}
\begin{cases}
\vspace{0.15cm}
\displaystyle \bigg( 1+\frac{p^{\prime 2}}{q^{\prime 2}} \bigg) (q^2+q^{\prime 2}) \leqslant Q^2 \big( 1+O
(Q^{-1}) \big) \\
\displaystyle \bigg( 1+\frac{p_\ell^2}{q_\ell^2}\bigg) \big( q_\ell^2+(Kq_\ell -q_{\ell-1})^2\big) \leqslant Q^2
\big( 1+O (Q^{-1}) \big) .
\end{cases}
\end{equation}
Since $\frac{p_\ell}{q_\ell} =\frac{p^\prime}{q^\prime}+O( \frac{\ell}{Q})$ and $q_\ell^2+(Kq_\ell
-q_{\ell-1})^2\leqslant 2Q^2$,
the second inequality in \eqref{7.23} can be also written as
\begin{equation*}
\bigg( 1+\frac{p^{\prime 2}}{q^{\prime 2}} \bigg) \big( q_\ell^2+(Kq_\ell -q_{\ell-1})^2\big) \leqslant Q^2
\big( 1+O (Q^{-1}) \big) ,
\end{equation*}
leading to
\begin{equation*}
R_Q^{\cap \,\cap} (\xi) =\sum\limits_{\substack{\ell\in [0,\xi) \\ K\in [1,\xi)}}\sum_{q'<Q}  \NN^{\cap\,
\cap}_{Q+O(Q^{1/2}),q^\prime,K,\ell} (\xi),
\end{equation*}
where $\NN_{Q,q^\prime,K,\ell}^{\cap\,\cap} (\xi)$ denotes the number of integer lattice points $(p^\prime,q)$ such that
\begin{equation}\label{7.24}
\begin{cases} \vspace{0.1cm}
& 0\leqslant p^\prime \leqslant q^\prime,\quad 0\leqslant q\leqslant Q, \quad
p^\prime q \equiv 1 \hspace{-6pt} \pmod{q^\prime},\quad
 0< Kq_\ell -q_{\ell -1} \leqslant Q \\  \vspace{0.1cm}
& \displaystyle \Upsilon_{\ell,K} \bigg( \frac{q}{Q},\frac{q^\prime}{Q}\bigg)
\leqslant \xi ,   \quad
p^{\prime 2}+q^{\prime 2} \leqslant \frac{Q^2 q^{\prime 2}}{\max\{ q^2+q^{\prime 2},
q_\ell^2 +(Kq_\ell -q_{\ell -1})^2\}}.
\end{cases}
\end{equation}

Applying Lemma \ref{L7} to the set $\Omega =\Omega^{\cap\, \cap}_{q^\prime,K,\ell,\xi}$ of elements
$(u,v)$ for which
\begin{equation*}
\begin{cases} \vspace{0.1cm}
& \displaystyle u\in [0,Q],\ \ v\in [0,q^\prime],\ \
 L_i \bigg( \frac{u}{Q},\frac{q^\prime}{Q}\bigg) >0,\ i=0,1,\ldots,\ell \\
& \displaystyle 0< KL_\ell \bigg(\frac{u}{Q},\frac{q^\prime}{Q}\bigg) -L_{\ell-1}
\bigg( \frac{u}{Q},\frac{q^\prime}{Q}\bigg) \leqslant 1,
\quad \Upsilon_{\ell,K} \bigg( \frac{u}{Q},\frac{q^\prime}{Q}\bigg) \leqslant \xi  \\
& \displaystyle v^2+q^{\prime 2} \leqslant \frac{Q^2 q^{\prime 2}}{\max \big\{ u^2+q^{\prime 2},
Q^2 L_\ell^2 \big(\frac{u}{Q},\frac{q^\prime}{Q}\big) +Q^2 \big( KL_\ell \big( \frac{u}{Q},\frac{q^\prime}{Q}\big)
- L_{\ell-1} \big( \frac{u}{Q},\frac{q^\prime}{Q}\big)\big)^2\big\}} ,
\end{cases}
\end{equation*}
with $A(\Omega) \leqslant Qq^\prime$, $\ell (\partial \Omega) \ll Q$, $L=q^{\prime 5/6}$, we find
\begin{equation*}
\NN^{\cap\,\cap}_{Q,q^\prime,K,\ell} (\xi) = \frac{\varphi (q^\prime)}{q^\prime} \cdot
\frac{A\big( \Omega^{\cap\,\cap}_{q^\prime ,K,\ell,\xi} \big)}{q^\prime} +O_\veps ( Qq^{\prime -1/6+\veps}) .
\end{equation*}
This leads in turn to
\begin{equation*}
R_Q^{\cap\,\cap}(\xi) = \MM^{\cap\,\cap}_Q (\xi) +O_{\xi,\veps} ( Q^{11/6+\veps}),
\end{equation*}
where
\begin{equation*}
\MM^{\cap\,\cap}_Q (\xi) = \sum\limits_{\substack{\ell\in [0,\xi) \\ K \in [1,\xi)}} \sum_{q^\prime \leqslant Q}
\frac{\varphi(q^\prime)}{q^\prime}\cdot \frac{A(\Omega_{q^\prime,K,\ell,\xi}^{\cap\,\cap})}{q^\prime} .
\end{equation*}

For fixed integers $K\in [1,\xi)$, $\ell \in [0,\xi)$, consider
the subset $T_{K,\ell,\xi}$ of $[0,1]^3$ defined as
\begin{equation}\label{7.25}
\left\{ (x,y,z) \in [0,1]^3: \begin{matrix}
0<L_{\ell+1}(x,y)=K L_\ell (x,y)-L_{\ell-1}(x,y)\leqslant 1, \Upsilon_{\ell,K} (x,y) \leqslant \xi
\\ \displaystyle \max \left\{ x^2+y^2 , L_\ell^2 (x,y)+L_{\ell +1}^2 (x,y) \right\} \leqslant \frac{1}{1+z^2}
\end{matrix} \right\} ,
\end{equation}
with $L_i$ and $\Upsilon_{\ell,K}$ as in \eqref{5.5} and \eqref{5.6}.

M\" obius summation is now applied to $h_2(q^\prime)=\frac{1}{q^\prime} A\big(
\Omega_{q^\prime,K,\ell,\xi}^{\cap\,\cap}\big)$. The quantity $\frac{1}{Q} h_2 (q^\prime)$
represents the area of the cross-section of the body $T_{K,\ell,\xi}$ by the plane $x=\frac{q^\prime}{Q}$.
This shows that $h_2$ is continuous and piecewise $C^1$ on $[0,Q]$ and furthermore the number of critical points
of $h_2$ is bounded uniformly in $\xi$ (and independently of $Q$). Hence the total
variation of $h_2$ on $[0,Q]$ is $\ll_\xi \| h_2\|_\infty \leqslant Q$.
Employing also the change of variables $(q^\prime,u,v)=(Qx,Qy,Qxz)$, $(x,y,z)\in [0,1]^3$ we find
\begin{equation*}
\MM^{\cap\,\cap}_Q (\xi) = \frac{1}{\zeta (2)} \sum\limits_{\substack{\ell\in [0,\xi) \\  K \in [1,\xi)}}
\Bigg( \int_{0}^Q \frac{dq^\prime}{q^\prime} A\big( \Omega^{\cap\,\cap}_{q^\prime,K,\ell,\xi}\big) +O(Q) \Bigg)
 = \frac{Q^2}{\zeta (2)} \sum\limits_{\substack{\ell\in [0,\xi) \\  K \in [1,\xi)}}
\operatorname{Vol} (T_{K,\ell,\xi}) +O_\xi (Q),
\end{equation*}
and so
\begin{equation}\label{7.26}
R_Q^{\cap\,\cap} (\xi) =\frac{Q^2}{\zeta(2)} \sum\limits_{\substack{\ell\in [0,\xi) \\  K \in [1,\xi)}}
\operatorname{Vol} (T_{K,\ell,\xi})+O_{\xi,\veps} (Q^{11/6+\veps}) .
\end{equation}

To show that $\xi\mapsto\operatorname{Vol}(T_{K,\ell,\xi})$ is $C^1$ on
$[1,\infty)$, we change variables $(x,y,z)=(\cos\theta,\sin\theta, \tan t)$ to obtain
\begin{equation}\label{7.27}
\operatorname{Vol} (T_{K,\ell,\xi}) =\int_0^{\pi/4} A_{K,\ell} (\xi,t)\,  \frac{dt}{\cos^2 t},
\end{equation}
where $A_{K,\ell} (\xi,t)$ is the area of the region defined by \eqref{1.3}.
Now notice that $K_i(x,y)\leqslant \xi$ when $1\leqslant i\leqslant \ell$,
as a result of (omitting the arguments of the functions)
\[ K_i=\frac{L_i +L_{i-2}}{L_{i-1}}\leqslant
\frac{1}{L_{i-2} L_{i-1}}+\frac{1}{L_{i-1} L_{i}} < \Upsilon_{\ell,K} \leqslant \xi.
\]
Similarly, $K_1=\frac{L_{-1}+L_1}{L_0}\leqslant \frac{L_{-1}}{L_0}+\frac{1}{L_0 L_1}<  \Upsilon_{\ell,K} \leqslant
\xi$.
Thus the projection of $T_{K,\ell,\xi}$ on the first
two coordinates is included into the union of disjoint cylinders
$\TT_{\mathbf k} :=\TT_{k_1} \cap T^{-1} \TT_{k_2} \cap \ldots \cap T^{-\ell+1} \TT_{k_\ell}$ with
$\TT_k=\{ (x,y): K_1 (x,y)=k\}$ and
${\mathbf k}=(k_1,\ldots ,k_\ell)\in [1,\xi)^{\ell}$. On each set
$\TT_{\mathbf k}$ all maps
$L_1 , \ldots  , L_\ell , L_{\ell+1}$ are linear, say $L_i(x,y) = A_i x + B_i y$ with
integers $A_i,B_i$ depending only on $k_1,\ldots,k_i$ for $i\leqslant \ell$ and
$A_{\ell+1},B_{\ell+1}$ depending only on ${\mathbf k}$ and $K$. Therefore the function
$F_{K,\ell} (\theta)$ is continuous on each region $\TT_{\mathbf k}$, and applying
Lemma \ref{L11} we conclude that the function $\xi\mapsto\operatorname{Vol}(T_{K,\ell,\xi})$ is $C^1$ on $[1,
\infty]$, being a sum of $[\xi]^\ell$ volumes, each of which $C^1$ as functions of $\xi$.

\begin{remark}\label{R14}
{\em The region $T_{K,\ell,\xi}$ can be simplified further. For
each integer $J \in [1,\xi)$, the map
$$\Psi_{J}:(u,v)\mapsto (J L_\ell(u,v)-L_{\ell-1}(u,v),L_\ell(u,v) )$$
is an area preserving injection on $\TT$, since it is the composition of $T^{\ell}$ in
\eqref{5.5} followed by
the linear transformation $(u,v)\mapsto (J v-u, v)$. Note that under
this map (omitting the arguments $(u,v)$ of the functions below):
\[
L_1\rightarrow \left[\frac{1+J L_\ell-L_{\ell-1}}{L_\ell}\right] -(J
L_\ell-L_{\ell-1})=L_{\ell-1}
\]
(using $L_{\ell-1}+L_{\ell}>1$), and by induction it follows similarly that
$L_i\rightarrow L_{\ell-i}$ for $0\leqslant i \leqslant \ell$. Also we have
that $\Psi_{J}(u,v)=(x,y)\in [0,1]^2$ if and only if $x=J L_\ell-L_{\ell-1}\in [0,1]$ and
$J= [\frac{1+x}{y} ]$.

Let us decompose the region $T_{K,\ell,\xi}$ into a disjoint union of regions
$T_{K,J;\ell,\xi}$, $1\leqslant J <\xi$, obtained by adding the condition
$[\frac{1+x}{y} ]=J$. By the discussion of the previous paragraph, the
map $(\Psi_{J},\mathrm{Id}_z)$ is a volume preserving bijection taking $U_{K,J;\ell,\xi}$
onto $T_{K,J;\ell,\xi}$, where
\[
U_{K,J;\ell,\xi} :=\left\{ (x,y,z) \in [0,1]^3: \begin{matrix} x+y>1, \quad
J L_\ell -L_{\ell-1}>0, \quad KL_0-L_1>0,\quad \Upsilon_{\ell,K,J} \leqslant
\xi  \\ \displaystyle L_0^2+(KL_0-L_1)^2 \leqslant
\frac{1}{1+z^2},\quad  L_\ell^2 +(J L_\ell-L_{\ell-1})^2  \leqslant
\frac{1}{1+z^2}
\end{matrix} \right\}.
\]
Here $L_i=L_i(x,y)$ and $\Upsilon_{\ell,K,J} (x,y)=\frac{J
L_\ell-L_{\ell-1}}{L_\ell (L_{\ell}^2+(J L_\ell-L_{\ell-1})^2)}
+\sum_{i=1}^\ell \frac{1}{L_{i-1}L_i}
+\frac{K L_0-L_1}{L_0 ( L_0^2+(K L_0-L_1)^2)}$.

For $\alpha \geqslant 1$, the transformation $(\Psi_\alpha,\mathrm{Id}_z) $ maps
bijectively the part of $U_{K,J;\ell,\xi}$  for which $[\frac{1+L_{\ell-1}}{L_\ell}
]=\alpha $ onto the part of
$U_{J,K;\ell,\xi}$ for which $[\frac{1+x}{y} ]=\alpha$. Therefore
$\operatorname{Vol}
(U_{K,J;\ell,\xi})=\operatorname{Vol} (U_{J,K;\ell,\xi})$ and the sum of volumes
appearing in \eqref{7.28} can be written more symmetrically:
\[
\sum_{K\in [1,\xi)} \operatorname{Vol} (T_{K,\ell,\xi})=\sum_{K,J\in [1,\xi)} \operatorname{Vol}
(U_{K,J;\ell,\xi}).
\]
As an example of using this formula, if $1<\xi\leqslant 2$ and $\ell=1$, we can only have
$K=J=1$ and the inequalities $J L_1-L_0>0$, $KL_0-L_1>0$ cannot be both satisfied, so
$U_{1,1;1,\xi}$ is empty. Therefore the only contribution from the $T$ bodies in \eqref{7.28}
comes from $T_{1,0,\xi}$ if $\xi\in (1,2]$.
}
\end{remark}

We can now prove the main theorem regarding the pair correlation of the quantities $\tan(\frac{\theta_\gamma}{2})$.
\begin{theorem}\label{T2}
The pair correlation measure $R_2^{\mathfrak T}$ exists on $[0,\infty)$. It is given by
the $C^1$ function
\begin{equation}\label{7.28}
R_2^{\mathfrak T} \bigg( \frac{3}{8}\xi\bigg) = \frac{8}{3\zeta (2)} \Bigg(
\sum\limits_{M\in {\mathfrak
S}} \operatorname{Vol} (S_{M,\xi}) +\sum_{\ell \in [0,\xi)}
\sum_{K\in [1,\xi)} \operatorname{Vol} (T_{K,\ell,\xi}) \Bigg) ,
\end{equation}
where the three-dimensional bodies $S_{M,\xi}$ are defined in \eqref{7.15} and the bodies
$T_{K,\ell,\xi}$ are defined in \eqref{7.25}.
\end{theorem}

\begin{proof}
By \eqref{7.19},and \eqref{7.26}, with $c_0\in ( \frac{1}{2},1)$ and $G(\xi)$ denoting the
sum of all volumes in \eqref{7.28}, we infer
\begin{equation}\label{7.29}
\RR_Q^\Phi (\xi) =\frac{Q^2}{\zeta(2)} \,G ( \xi ) +O_{\xi,\veps} ( Q^{(11+c_0)/6+\veps}) .
\end{equation}
It follows that the function $G$ is $C^1$ on $[0,\infty)$ as a result of $\xi\mapsto
\operatorname{Vol}(S_{M,\xi})$  being $C^1$ on $[0,\infty)$, and  of
$\xi\mapsto \operatorname{Vol}(T_{K,\ell,\xi})$ being $C^1$ on $[1,\infty)$.
Corollary \ref{C4} and \eqref{7.29} now yield, for $\beta\in ( \frac{2}{3},1)$,
\begin{equation*}
\RR_Q^\Psi (\xi)=\frac{Q^2}{\zeta(2)}\,\Big( G\big( \xi+O(Q^{2-3\beta})\big) + G\big( O(Q^{2-3\beta})\big) \Big)
+O_{\xi,\veps} ( Q^{1+\beta}\ln Q +Q^{(11+c_0)/6+\veps} ).
\end{equation*}
Employing again the differentiability of $G$ and $G(0)=0$, and taking $\beta=\frac{3}{4}$,
$c_0=\frac{1}{2}+\veps$, this provides
\begin{equation}\label{7.30}
\RR_Q^\Psi (\xi) =\frac{Q^2}{\zeta(2)} G(\xi) +O_{\xi,\veps} ( Q^{23/12+\veps}) .
\end{equation}
Equality \eqref{7.28}  now follows from \eqref{7.30} and Corollary \ref{C8}.
\end{proof}

\section{Pair correlation of \texorpdfstring{$\{\theta_\gamma\}$}{theta[gamma] }
}\label{Section8}

\subsection{Proof of Theorem \ref{T1}}
In this section we pass to the pair correlation of the angles $\{\theta_\gamma\}$, estimating
\[
\RR^{\theta}_{Q}(\xi)  := \# \big\{ (\gamma,\gamma^\prime) \in \wMM_Q^2 :
0\leqslant Q^2 (\theta_{\gamma^\prime} -\theta_\gamma) \leqslant \xi\big\}.
\]
Define the pair correlation kernel $F(\xi,t)$ as follows
\begin{equation}\label{8.1}
F(\xi,t) = \sum_{M\in{\mathfrak S}} B_{M}(\xi,t) +
\sum\limits_{\substack{\ell \in [0,\xi) \\ K\in [1,\xi)}}
A_{K,\ell} (\xi,t).
\end{equation}
where  $B_{M}(\xi,t)$, $A_{K,\ell} (\xi,t)$ are the areas from from \eqref{7.20}, \eqref{7.27}, so that by
\eqref{7.30} we have
\[
\RR_Q^\Psi (\xi)=\frac{Q^2}{\zeta(2)}\int_0^{\pi /4} F(\xi,t) \frac{dt}{\cos^2 t} + O_{\xi,\veps} (
Q^{(11+c_0)/6+\veps}).
\]

\begin{proposition}\label{P15}
$\quad \displaystyle
\RR^\theta_Q (\xi)  =\frac{Q^2}{\zeta(2)} \int_0^{\pi/4} F\bigg(
\frac{\xi}{2\cos^2 t},t\bigg) \frac{dt}{\cos^2 t}
+O_{\xi,\veps} ( Q^{47/24+\veps}) .$
\end{proposition}
Before giving the proof, note that Theorem \ref{T1} follows from the proposition as $Q\rightarrow
\infty$, taking into account the different normalization in the definition of $\RR^\theta_Q (\xi)$,
$R_Q^{\mathfrak A} (\xi)$, and defining, in view of Proposition \ref{P15} and \eqref{8.1}:
\begin{equation*}
 B_M(\xi):= \int_0^{\pi/4} B_M\bigg(
\frac{\xi}{2\cos^2 t},t\bigg) \frac{dt}{\cos^2 t}, \qquad   A_{K,\ell}(\xi):=
\int_0^{\pi/4} A_{K,\ell}\bigg(
\frac{\xi}{2\cos^2 t},t\bigg) \frac{dt}{\cos^2 t}.
\end{equation*}
From the definitions of  $B_M(\xi,t)$, $A_{K,\ell}(\xi, t)$ in the equations following
\eqref{7.20}, \eqref{7.27}, it is clear that
$B_{M}(\frac{\xi}{2\cos^2 t},t )= B_M ( \frac{\xi}{2},0) \cos^2
t, A_{K,0}(\frac{\xi}{2\cos^2 t},t )=A_{K,0} ( \frac{\xi}{2},0)\cos^2 t,$ hence one has
\begin{equation}\label{8.2}
B_M(\xi)=\frac{\pi}{4}B_M \bigg( \frac{\xi}{2},0\bigg),\quad
A_{K,0}(\xi)=\frac{\pi}{4}A_{K,0} \bigg( \frac{\xi}{2},0\bigg),
\end{equation}
which together with \eqref{7.22} yields the formula for $B_M(\xi)$ given in Theorem \ref{T1}.
Note that the range of summation in Theorem \ref{T1} restricts to $K<\frac{\xi}{2}$, $
\ell< \frac{\xi}{2}$, compared with the range in \eqref{8.1}. Indeed, from the description of $A_{K,\ell}(
\frac{\xi}{2\cos^2 t},t)$ following \eqref{7.27} we see that
$\ell<\Upsilon_{\ell,K}\leqslant \frac{\xi}{2}$, while for $K$ we have
$
K<\frac{1}{L_{\ell-1}L_{\ell}}+\frac{K
L_\ell-K_{\ell-1}}{L_{\ell}}<\Upsilon_{\ell,K}\leqslant \frac{\xi}{2},
$ and similarly for $\ell=0$.
\begin{proof}
Consider $I=[\alpha,\beta)$ with $N=[Q^d]$,
$\vert I\vert=N^{-1} \sim Q^{-d}$,
$I^+=[\alpha-Q^{-d^\prime},\beta +Q^{-d^\prime}]$,
$I^-=[\alpha+Q^{-d^\prime},\beta -Q^{-d^\prime}]$ where $0< d=\frac{1}{24} < d^\prime=\frac{1}{12} < 1$.
Partition the interval $[0,1)$ into the union of $N$ intervals $I_j =[\alpha_j,\alpha_{j+1})$ with
$\vert I_j\vert =N^{-1}$ as above. Associate the intervals $I_j^\pm$ to $I_j$ as described above.
Denote
\begin{equation*}
\begin{split}
{\mathfrak R}^\sharp_Q  := & \{ (\gamma,\gamma^\prime) \in \wMM_Q^2 : \gamma \neq \gamma^\prime \} ,  \\
\RR^{\theta}_{I,Q}(\xi) := & \# \big\{ (\gamma,\gamma^\prime) \in {\mathfrak R}^\sharp_Q :
0\leqslant Q^2(\theta_{\gamma^\prime} -\theta_\gamma) \leqslant \xi,\  \Psi (\gamma),\Psi (\gamma^\prime)\in
I\big\} \\ &
\leqslant \RR_{I,Q}^{\theta,\natural}(\xi) := \# \big\{ (\gamma,\gamma^\prime) \in {\mathfrak R}^\sharp_Q :
0\leqslant Q^2( \theta_{\gamma^\prime} -\theta_\gamma) \leqslant \xi,\  \Psi (\gamma)\in I\big\} ,\\
\RR^\Psi_{I,Q} (\xi)  : = & \# \big\{ (\gamma,\gamma^\prime) \in {\mathfrak R}^\sharp_Q :
0\leqslant Q^2(\Psi (\gamma^\prime) -\Psi (\gamma)) \leqslant \xi,\
\Psi (\gamma),\Psi (\gamma^\prime)\in I \big\} ,\\
\RR^{\Psi,\flat}_{I,Q} (\xi) : =  & \# \big\{ (\gamma,\gamma^\prime) \in {\mathfrak R}^\sharp_Q :
0\leqslant Q^2( \Psi (\gamma^\prime) -\Psi (\gamma)) \leqslant \xi,\
\gamma_- ,\gamma_+ \in I \big\} ,  \\
\RR^{\Phi,\flat}_{I,Q} (\xi) : =  & \# \big\{ (\gamma,\gamma^\prime) \in {\mathfrak R}^\sharp_Q :
0\leqslant Q^2(\Phi (\gamma^\prime) -\Phi (\gamma)) \leqslant \xi,\
\gamma_- ,\gamma_+\in I \big\} .
\end{split}
\end{equation*}
Expressing $\theta_{\gamma^\prime}-\theta_\gamma$ and $\Psi (\gamma^\prime)-\Psi (\gamma)$ by the Mean Value
Theorem we find
\begin{equation}\label{8.3}
\RR_{I,Q}^\Psi \big( {\textstyle\frac{1}{2}} (1+\alpha^2)\xi \big) \leqslant \RR_{I,Q}^\theta (\xi) \leqslant
\RR_{I,Q}^\Psi \big( {\textstyle\frac{1}{2}} (1+\beta^2)\xi \big).
\end{equation}

\begin{lemma}\label{L16}
The following estimates hold:
\begin{enumerate}
\item[(i)]
$\displaystyle \ \sum_{j=1}^N \RR^\theta_{I_j,Q} (\xi) \leqslant \RR^\theta_Q (\xi) =\sum_{j=1}^N
\RR_{I_j,Q}^{\theta,\natural} (\xi) \leqslant
\sum_{j=1}^N \RR_{I_j^+,Q}^\theta (\xi) +O( Q^{15/8}\ln^2 Q).$
\item[(ii)] $\displaystyle \ \RR^\Psi_{I,Q}(\xi) =\RR_{I,Q}^{\Psi,\flat} (\xi) +O( Q^{1+d^\prime} \ln^2 Q)
.$
\end{enumerate}
\end{lemma}

\begin{proof}
The first inequality in (i) is trivial. For the second one note first that the total number of pairs
$(\gamma,\gamma^\prime)$ with
$0\leqslant \theta_{\gamma^\prime}-\theta_\gamma \leqslant \xi Q^{-2}$ and $qq^\prime \leqslant Q^{d^\prime}$, with
$\gamma_-=\frac{p}{q}$, $\gamma_+=\frac{p^\prime}{q^\prime}$ is $\ll_\xi Q^d (Q^{d^\prime}\ln Q) (Q\ln Q)$. For
$\gamma$ with $qq^\prime >Q^{-d^\prime}$
use $\Psi (\gamma^\prime)-\beta \leqslant \Psi (\gamma^\prime)-\Psi (\gamma) \leqslant
\frac{1}{qq^\prime} \leqslant Q^{-d^\prime}$, so $\Psi (\gamma^\prime) \in I_j^+$.
The proof of (ii) is analogous.
\end{proof}

Lemma \ref{L16} and \eqref{8.3} yield
\begin{equation*}
\sum_{j=1}^N \RR^\Psi_{I_j,Q} \big( {\textstyle\frac{1}{2}} (1+\alpha_j^2)\xi \big) \leqslant \RR^\theta_Q (\xi) \leqslant
\sum_{j=1}^N \RR^\Psi_{I_j^+,Q} \big( {\textstyle\frac{1}{2}} (1+\alpha_{j+1}^2)\xi \big) +O_\veps ( Q^{15/8+\veps}) .
\end{equation*}

To estimate $\RR^\Phi_{I,Q}(\xi)$ we repeat the previous arguments for a short interval $I$ as above.
Adding everywhere the condition $\gamma_-,\gamma_+\in I$ we modify $\RR^\Cap_Q$ by $\RR^\Cap_{I,Q}$
and $R^\Cap_Q$ by $R^\Cap_{I,Q}$ in Lemma \ref{L5},
$\RR^{\cap\,\cap}_Q$ by $\RR^{\cap\,\cap}_{I,Q}$ and
$R^{\cap\,\cap}_Q$ by $R^{\cap\,\cap}_{I,Q}$ in Lemma \ref{L6}.
The additional condition $\frac{p}{q},\frac{p^\prime}{q^\prime}\in I$ is inserted in \eqref{7.2}.
The condition $0\leqslant p^\prime \leqslant q^\prime$ is replaced by
$q^\prime \alpha \leqslant p^\prime < q^\prime \beta$ in \eqref{7.4}, and \eqref{7.24},
and $0\leqslant p\leqslant q$ is replaced by $q\alpha \leqslant p< q\beta$ in \eqref{7.4}.
The condition $v\in [0,q^\prime]$ is replaced by $v\in [q^\prime \alpha,q^\prime \beta)$ in the definition
of $\Omega_{M,q^\prime,\xi}$,  and
$\Omega^{\cap\,\cap}_{q^\prime,\ell,K,\xi}$.
The bodies $S_{M,\xi}$ and $T_{K,\ell,\xi}$ are substituted respectively by
$S_{I,M,\xi}$ and $T_{I,K,\ell,\xi}$ after replacing the condition $z\in [0,1]$ in their definition
by $z\in [\alpha,\beta)$. The analogs of \eqref{7.20} and \eqref{7.27} hold:
\begin{equation}\label{8.4}
\operatorname{Vol}(S_{I,M,\xi}) =\int_I B_{M}(\xi,t)\frac{dt}{\cos^2 t}
,\quad
\operatorname{Vol}(T_{I,K,l,\xi}) =\int_I
A_{K,\ell}(\xi, t)\frac{dt}{\cos^2  t}.
\end{equation}

The approach from Section \ref{Section7} under the changes specified in the previous paragraph leads to
\begin{equation}\label{8.5}
R_{I,Q}^{\Phi,\flat} (\xi) =R^\Cap_{I,Q}(\xi) +R^{\cap\,\cap}_{I,Q}(\xi) =
\frac{Q^2}{\zeta(2)} \int_{\tan^{-1} I} F(\xi,t) \frac{dt}{\cos^2 t}+O_{\xi,\veps} ( Q^{23/12+\veps}),
\end{equation}
with the pair correlation kernel $F(\xi,t)$ defined by \eqref{8.1}. We also have
\begin{equation}\label{8.6}
R^{\Phi,\flat}_{I^+,Q}(\xi) =R^{\Phi,\flat}_{I,Q} (\xi) +O_{\xi,\veps}(Q^{23/12+\veps}+Q^{2-d^\prime}) .
\end{equation}
The analogs of Lemmas \ref{L5}, \ref{L6} yield upon \eqref{8.5} and \eqref{8.6}
\begin{equation}\label{8.7}
\RR^{\Phi,\flat}_{I,Q}(\xi) =\frac{Q^2}{\zeta(2)} \int_{\tan^{-1} I} F\big( \xi+O(Q^{-1/3}),t\big) \frac{dt}{\cos^2
t}+O_{\xi,\veps} ( Q^{23/12+\veps}) =\RR^{\Phi,\flat}_{I^+,Q}(\xi) .
\end{equation}
The analog of Corollary \ref{C4} and \eqref{8.7} yield
\begin{equation}\label{8.8}
\begin{split}
\RR_{I,Q}^{\Psi,\flat} & (\xi) = \RR^{\Phi,\flat}_{I,Q} \big( \xi+O(Q^{-1/4})\big) +\RR^{\Phi,\flat}_{I,Q} \big(
O(Q^{-1/4})\big) +O( Q^{7/4+\veps})
\\ & =\frac{Q^2}{\zeta(2)} \int_{\tan^{-1} I} \Big( F\big( \xi+O(Q^{-1/4}),t\big)+F\big( Q^{-1/4},t)\big) \Big)
\frac{dt}{\cos^2 t} +O_{\xi,\veps} ( Q^{23/12+\veps} )
\\ & =\RR_{I^+,Q}^{\Psi,\flat} (\xi) .
\end{split}
\end{equation}
As shown in Section \ref{Section7} the function $F$ is $C^1$ in $\xi$, thus \eqref{8.8} gives actually\footnote{The
argument from Section \ref{Section7}
applies before integrating with respect to $t$ on $[ 0,\frac{\pi}{4}]$, showing that $F$ is $C^1$.}
\begin{equation}\label{8.9}
\RR_{I,Q}^{\Psi,\flat} (\xi) =\frac{Q^2}{\zeta(2)} \int_{\tan^{-1} I} F(\xi,t)\frac{dt}{\cos^2 t} +
O_{\xi,\veps} ( Q^{23/12+\veps}) = \RR^{\Psi,\flat}_{I^+,Q}(\xi) .
\end{equation}
Lemma \ref{L16} (i), \eqref{8.9}, and $F\in C^1 [0,\infty)$ yield
\begin{equation}\label{8.10}
\RR_{I,Q}^{\Psi} (\xi) = \frac{Q^2}{\zeta(2)} \int_{\tan^{-1} I} F( \xi ,t)
\frac{dt}{\cos^2 t} +O_{\xi,\veps} ( Q^{23/12+\veps} +Q^{2-d^\prime}) =\RR_{I^+,Q}^{\Psi} (\xi) .
\end{equation}
Let also $\omega_j=\tan^{-1} \alpha_j$. From \eqref{8.10} and \eqref{8.3} we further infer
\begin{equation*}
\begin{split}
\frac{Q^2}{\zeta(2)} \int_{\omega_j}^{\omega_{j+1}} & F\Big( {\textstyle\frac{1}{2}} (1+\alpha_j^2)\xi ,t\Big)
\frac{dt}{\cos^2 t} +
O_{\xi,\veps} ( Q^{23/12+\veps} +Q^{2-d^\prime}) \leqslant \RR^\theta_{I_j ,Q}(\xi) \leqslant
\RR^\theta_{I^+_j ,Q}(\xi) \\  & \leqslant
\frac{Q^2}{\zeta(2)} \int_{\omega_j}^{\omega_{j+1}} F\big( {\textstyle\frac{1}{2}} (1+\alpha_{j+1}^2)\xi ,t\big)
\frac{dt}{\cos^2 t} +
O_{\xi,\veps} ( Q^{23/12+\veps} +Q^{2-d^\prime}).
\end{split}
\end{equation*}
Employing also
\begin{equation*}
\int_{\omega_j}^{\omega_{j+1}} F\big( {\textstyle\frac{1}{2}} (1+\alpha_j^2)\xi,t\big) \frac{dt}{\cos^2 t} =
\int_{\omega_j}^{\omega_{j+1}} \Big( F\big( {\textstyle\frac{1}{2}} (1+\tan^2 t)\xi,t\big) +O(\omega_{j+1}-\omega_j)\Big)
\frac{dt}{\cos^2 t}
\end{equation*}
and $(\omega_{j+1}-\omega_j)^2 \leqslant Q^{-2d}$ we find
\begin{equation}\label{8.12}
\RR^\theta_{I_j,Q}(\xi) =\frac{Q^2}{\zeta(2)} \int_{\omega_j}^{\omega_{j+1}} F \big(
{\textstyle\frac{1}{2}} (1+\tan^2 t)\xi,t\big) \frac{dt}{\cos^2 t} +O_{\xi,\veps} ( Q^{23/12+\veps}) =
\RR^\theta_{I_j^+,Q}(\xi) .
\end{equation}
Finally Lemma \ref{L16} (i) and \eqref{8.12} yield the equality from Proposition \ref{P15}.
\end{proof}

\subsection{Explicit formula for \texorpdfstring{$g_2^{\mathfrak A}$}{g2} } \label{sec8.1}
Next we compute the derivatives $B_M'(\xi)$, thus proving Corollary \ref{C1}. We also obtain the
explicit formula \eqref{8.13} for $g_2^{\mathfrak A}$ on a larger range than in Corollary \ref{C1}, after
computing the derivative $A_{K,0}^\prime (\xi)$.

\begin{lemma} \label{L17}
For $M\in \mathfrak{S}$, let $T=T_M$, $Z=Z_M$ as in \eqref{3.1}. The derivative $B_M'(\xi)$ is
given by:
\[
B_M'(\xi)=\begin{cases} \vspace{0.1cm}
\displaystyle\frac{\pi}{4\xi^2}\ln\bigg(\frac{T+\sqrt{T^2-4}}{T+\sqrt{T^2-4-\xi^2}}\bigg)
& \text{\mbox if $\xi\leqslant 2Z$} \\  \vspace{0.1cm}
\displaystyle\frac{\pi}{8\xi^2}\ln\bigg(\frac{(T+\sqrt{T^2-4})^2
(T-\sqrt{T^2-4-\xi^2})}{(4+4Z^2)(T+\sqrt{T^2-4-\xi^2})} \bigg)
& \mbox{if $2Z \leqslant \xi\leqslant \sqrt{T^2-4}$} \\
\displaystyle\frac{\pi}{8\xi^2}\ln\bigg(\frac{(T+\sqrt{T^2-4})^2
}{4+4Z^2} \bigg)			 & \mbox{if  $\xi\geqslant \sqrt{T^2-4}.$}
          \end{cases}
\]
\end{lemma}
\begin{proof} Using \eqref{8.2}, we proceed as in the proof of Lemma \ref{L13}:
\[
B_M(\xi)=\frac{\pi}{4\xi}\int_{0}^{\pi/2}
\bigg( \frac{\xi}{\sqrt{T^2-4}}\cdot
\frac{1}{U_T+\cos(2\theta-\theta_M)}-\frac{|\sin (2\theta-\theta_M)|}{U_T+\cos
(2\theta-\theta_M)}\bigg)_+  d \theta,
\]
where $U_T=\frac{T}{\sqrt{T^2-4}}$ and $\theta_M\in (0,\frac{\pi}{2})$ has
$\sin\theta_M =\frac{2 Z}{\sqrt{T^2-4}}$.
Applying Lemma \ref{L11}, we obtain:
\[B_M'(\xi)=\frac{\pi}{4\xi^2}\int_I \frac{|\sin (2\theta-\theta_M)|}{U_T+\cos
(2\theta-\theta_M)}\, d\theta ,
\]
with $I=\{\theta\in (0,\frac{\pi}{2}):
|\sin(2\theta-\theta_M)|<\frac{\xi}{\sqrt{T^2-4}}\}$. Clearly
$I=( 0,\frac{\pi}{2})$ when $\xi>\sqrt{T^2-4}$, and if $\xi\leqslant
\sqrt{T^2-4}$, let $\alpha=\alpha(\xi)\in ( 0,\frac{\pi}{4})$ such that $\sin
2\alpha= \frac{\xi}{\sqrt{T^2-4}}$. Then
\[
\xi\leqslant 2Z \iff
\alpha\leqslant \theta_M/2 \iff I=[\theta_M/2-\alpha, \theta_M/2+\alpha],\]
\[
2Z\leqslant \xi \leqslant \sqrt{T^2-4}\iff \alpha\in[\theta_M/2, \pi/4] \iff I=[0,
\theta_M/2+\alpha]\cup [\pi/2+\theta_M/2-\alpha , \pi/2],
\]
and the integral is easy to compute. For $M= \left( \begin{smallmatrix} 1 & 1 \\ 0 & 1
\end{smallmatrix}\right)$ and $\xi=3$, the region with area  $B_M(\xi/2,0)$ is the one
hashed vertically in Figure \ref{Figure6}.
\end{proof}
\comment{
\begin{figure}[ht]
\begin{center}
\includegraphics*[scale=0.55, bb = 0 0 350 270]{sect1.eps}
\includegraphics*[scale=0.55, bb = 0 0 350 270]{sect2.eps}
\caption{The area $B_M(\xi/2,0)$ when $2Z\leqslant \xi \leqslant \sqrt{T^2-4}$.}
\label{Figure6}
\end{center}
\end{figure}
}
A similar computation using \eqref{8.2} gives the formula:
\[A_{K,0}'(\xi)=\frac{\pi}{4\xi^2} \cdot \begin{cases} 0 & \text{ if } \xi \leqslant 2K \\
\ln(1+K^2)+\ln\left(\displaystyle\frac{(1+x_1^2)(1+(x_2-K)^2)}{(1+x_2^2)(1+(x_1-K)^2)}
\right)& \text{ if } \xi\in [2K, K\sqrt{K^2+4}]\\
\ln(1+K^2) & \text{ if } \xi\geqslant K\sqrt{K^2+4} ,
\end{cases}
\]
where $x_2>x_1$ are the roots of $x^2(\xi+2K)-2xK(\xi+K)+\xi(K^2+1)-2K=0$. By the last
paragraph in Remark \ref{R14}, the body $T_{1,1,\xi}$ is empty, so $A_{1,1}(\xi)=0$,
and we have an explicit formula on a larger range than in the introduction:
\begin{equation}\label{8.13}
g_2^{\mathfrak A}\left( \frac{3}{4\pi}\xi
\right)=\frac{32\pi}{9\zeta(2)}\left(\sum_{M\in
\mathfrak{S}} B_M'(\xi)+ A_{1,0}'(\xi)\right), \quad
0<\xi\leqslant 4.
\end{equation}

We can now explain the presence of the spikes in the graph of
$g_2^{\mathfrak{A}}$ in Figure \ref{Figure1}. The function $B_M'(\xi)$ is not
differentiable
at $\xi=2F$ and $\sqrt{T^2-4}$, while the function $A_{K,0}'(\xi)$ is not
differentiable at $\xi=2K$ and $\sqrt{(K^2+2)^2-4}$. At the point $\xi=\sqrt{5}$, two
of the functions $B_M'(\xi)$, as well as $A_{1,0}'(\xi)$, have infinite slopes on the
left, which gives the spike on the graph of $g_2^{\mathfrak A}(x)$ at
$x=\frac{3}{4\pi}\sqrt{5}$.

\bigskip

\textbf{\small Acknowledgments}
We would like to thank the referees for their constructive comments, which led to
improvements and clarifications in the presentation. The first author was supported in
part by the CNCS - UEFISCDI grant PN-II-RU-TE-2011-3-0259. The second author was supported
in part by grant CNCSIS PD 243-171/2010. The third author was supported in part by
European Community's Marie Curie
grant PIRG05-GA-2009-248569 and by the CNCS - UEFISCDI grant PN-II-RU-TE-2011-3-0259. The
fourth author was supported in part by NSF grant DMS-0901621.

\end{document}